\theoremstyle{plain}
\newtheorem{theorem}{Theorem}
\newtheorem{lemma}[theorem]{Lemma}
\newtheorem{conjecture}[theorem]{Conjecture}
\newtheorem{claim}[theorem]{Claim}
\newtheorem{observation}[theorem]{Observation}
\theoremstyle{definition}
\newtheorem{definition}[theorem]{Definition}
\newtheorem{setup}[theorem]{Setup}
\newtheorem*{procedure*undirected}{Random coloring procedure for undirected graphs}
\newtheorem*{procedure*}{Random coloring procedure for digraphs}
\numberwithin{equation}{section}
\newcommand{\sm}{\setminus}
\renewcommand{\subset}{\subseteq}
\newcommand{\D}{\Delta}
\newcommand{\K}{\mathrm{Keep}}
\newcommand{\R}{\mathrm{Retain}}
\newcommand{\Eq}{\mathrm{Eq}}
\renewcommand{\Pr}{\mathbf{Pr}}
\newcommand{\Exp}{\mathbf{E}}
\renewcommand{\le}{\leqslant}
\renewcommand{\ge}{\geqslant}
\renewcommand{\leq}{\leqslant}
\renewcommand{\geq}{\geqslant}
\newcommand{\Vq}{\mathrm{Vq}}
\renewcommand{\l}{\ell}
\newcommand{\p}{p}
\newcommand{\cL}{\mathcal{L}}
\newcommand{\Res}{\mathrm{Reserve}}
\title{The List Linear Arboricity of Digraphs}
\author[1]{Yueping Shi\thanks{Email: {\tt shiyp9@mail2.sysu.edu.cn}.} } 
\author[1]{Ping Hu\thanks{Email: {\tt huping9@mail.sysu.edu.cn}. Supported by National Key Research and Development Program of China (2021YFA1002100) and National Natural Science Foundation of China (12471337).} }
\affil[1]{School of Mathematics, Sun Yat-sen University, Guangzhou, 510275, China.}
\begin{document}

\maketitle
	
\begin{abstract}
	A \emph{(directed) linear forest} is a (di)graph whose components are (directed) paths. The \emph{linear arboricity} $la(F)$ of a (di)graph $F$ is the minimum number of (directed) linear forests required to decompose its edges. 
    Akiyama, Exoo, and Harary (1980) proposed the Linear Arboricity Conjecture that $la(G) \leq \left\lceil \frac{\D+1}{2}\right\rceil$ for any graph $G$ of maximum degree $\D$. The current best known bound, due to Lang and Postle (2023), establishes $la(G) \leq \frac{\D}{2} + 3\sqrt{\D} \log^4 \D$ for sufficiently large $\D$. And they proved this in the stronger list setting proposed by An and Wu.

    For a digraph $D$, let its maximum degree $\Delta(D)$ be the maximum of all in-degrees and out-degrees of its vertices.
    Nakayama and P\'{e}roche (1987) conjectured that $la(D) \leq \Delta(D)+1$ for every digraph $D$. 
    We extend Lang and Postle's result to digraphs with a matching error term.
    We show that  $la(D) \leq\D + 6\sqrt{\D} \log^4 \D$ for  any digraph $D$ with $\D = \Delta(D)$ sufficiently large.
    Moreover, we also establish this bound in the stronger list setting, where each arc $e \in A(D)$ is assigned a list of colors, and each arc is assigned a color from its list such that each color class forms a directed linear forest.
		
\end{abstract}

\section{Introduction}
In this paper, a \emph{digraph} is a finite loopless directed graph without parallel arcs (but possibly $2$-cycles) and an \emph{undirected graph} is also a finite and simple graph, unless otherwise stated. A \emph{multigraph} may contain parallel edges but no loops. The \emph{underlying multigraph} of a digraph $D$ is the graph $\overline{D}$ obtained by treating the arcs of $D$ as unordered edges. 
A \emph{linear forest} is a disjoint union of paths, and the \emph{linear arboricity} $la(G)$ of a graph $G$, defined by Harary~\cite{H70}, is the minimum number of linear forests needed to partition the edges of $G$.
In 1980, Akiyama, Exoo and Harary~\cite{AEH80} proposed the following linear arboricity conjecture.
\begin{conjecture}[Linear Arboricity Conjecture]\label{con:linear-arboricity}
	For every undirected graph $G$ with maximum degree $\D$, $\lceil \frac{\D}{2}\rceil \le la(G) \le\lceil \frac{\D+1}{2}\rceil$.
\end{conjecture}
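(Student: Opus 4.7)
The lower bound $\lceil \Delta/2 \rceil \le la(G)$ is immediate: in any linear forest every vertex has degree at most $2$, so decomposing $E(G)$ into $k$ linear forests bounds the maximum degree of $G$ by $2k$, forcing $k \ge \lceil \Delta/2 \rceil$.

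The upper bound is the substantive direction and, as stated, is the content of the Linear Arboricity Conjecture itself. A natural plan is to follow Alon's probabilistic template: first, reduce to the $\Delta$-regular case by padding with a perfect matching (after enlarging by an auxiliary vertex if $\Delta$ is odd); second, orient $G$ Eulerian-fashion so that each vertex has in-degree and out-degree $\Delta/2$; third, color the arcs uniformly at random with $k \approx \lceil \Delta/2 \rceil$ colors and use the Lov\'asz Local Lemma to show that, with positive probability, every color class induces a subgraph of maximum degree $2$, i.e., a disjoint union of paths and cycles; finally, repair the resulting cycles by shifting one arc out of each cycle into a reserved class, which accounts for the additive $+1$ in $\lceil (\Delta+1)/2 \rceil$.

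The main obstacle, and the reason the conjecture has remained open for four decades, is step four: the LLL naturally controls \emph{local} bad events (high degree, short cycles) but a linear forest must avoid cycles of \emph{every} length, which is a global constraint. One must instead argue that long monochromatic cycles, being rare, can be broken up via a small correction --- this is precisely where partial results lose an additive error term. The best known bounds, including the Lang--Postle theorem cited in the abstract, incur an $O(\sqrt{\Delta}\log^4 \Delta)$ loss through a nibble/iteration scheme, and closing this gap to the conjectured $\lceil (\Delta+1)/2 \rceil$ appears to require a genuinely new idea rather than a refinement of the probabilistic toolkit above.
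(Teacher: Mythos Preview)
The statement is a \emph{conjecture}, not a theorem: the paper does not prove it, and indeed it remains open. There is therefore no ``paper's own proof'' to compare against. Your proposal handles this correctly: the lower bound $\lceil \Delta/2 \rceil \le la(G)$ is proved cleanly, and you rightly identify the upper bound as the open content of the conjecture, sketching Alon's probabilistic template and pinpointing the genuine obstruction (monochromatic cycles of unbounded length are a global constraint that the Local Lemma cannot directly control). Your closing remark that the best known bounds, such as Lang--Postle's Theorem~\ref{thm:Ordinary}, lose an additive $O(\sqrt{\Delta}\log^4 \Delta)$ term is exactly what the paper records. Nothing is missing or wrong here; you have correctly recognized that no proof is expected.
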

The Linear Arboricity Conjecture has been extensively studied and verified for many classes of simple graphs (see e.g. \cite{CHY24,W24}). 
Since an edge decomposition into forests can be viewed as an edge coloring with each color class inducing a forest, it is natural to consider its list coloring version.

A list assignment to edges of $G$ is a function $L$ that to each edge $e \in E(G)$ assigns a set $L(e)$ of colors.
A \emph{linear $L$-coloring} of $G$ is a map $\phi$ defined on $E(G)$ such that $\phi(e)\in L(e)$ for every $e\in E(G)$, and for every color $c$ in the range of  $\phi$, $\phi^{-1}(c)$ induces a linear forest. The \emph{list linear arboricity} of $G$, denoted by $lla(G)$, is the minimum $k$ such that for every list assignment $L$ to edges of $G$ with $|L(e)|\ge k$ for every $e\in E(G)$, there exists a linear $L$-coloring of $G$. If all lists of $L$ are the same, then a linear $L$-coloring is the same as an edge decomposition into linear forests, so $la(G) \le lla(G)$.
Wu~\cite{W99} (Conjecture 6.2.5) conjectured that $lla(G) = la(G)$ for all graphs $G$. This was strengthened by An and Wu~\cite{AW99} to the List Linear Arboricity Conjecture. 

\begin{conjecture}[List Linear Arboricity Conjecture]\label{conj:LLAC}
	For every undirected graph $G$ with maximum degree $\D$, $\lceil\frac{\D}{2}\rceil \le la(G)=lla(G) \le \lceil \frac{\D+1}{2}\rceil $.
\end{conjecture}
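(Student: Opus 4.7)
The plan is to prove the three ingredients of the statement separately: the lower bound $\lceil \D/2\rceil \le la(G)$, the chain $la(G) \le lla(G)$, and the upper bound $lla(G) \le \lceil (\D+1)/2\rceil$. The first two are routine: every linear forest contributes at most $2$ to the degree of any vertex, so a vertex of degree $\D$ must be incident with at least $\lceil \D/2\rceil$ color classes, and $la(G) \le lla(G)$ is obtained by choosing all lists equal to a fixed palette of size $lla(G)$. The entire content of the conjecture, including the equality $la(G) = lla(G)$, is therefore concentrated in the upper bound.

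For that upper bound I would proceed by induction on $\D$, writing $k = \lceil(\D+1)/2\rceil$. Given a list assignment $L$ with $|L(e)| \ge k$ for every edge $e$, the aim is to extract one ``rainbow'' linear forest $F$---that is, to select a color $\phi(e)\in L(e)$ for the edges of a carefully chosen subgraph so that every edge of $F$ receives the same color---in such a way that $G - F$ has maximum degree at most $\D - 2$ and still admits a residual list assignment $L'$ of lists of size $k - 1$. Iterating this step $k$ times produces a linear $L$-coloring of $G$. The inductive core reduces to the following local claim: for every graph $H$ of maximum degree $\D$ and every list assignment $L$ with $|L(e)| \ge \lceil(\D+1)/2\rceil$, there is a color $c$ and a linear forest $F \subset H$ with $c\in L(e)$ for every $e\in F$ that meets every vertex of degree exactly $\D$ in at least two edges.

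Two families of tools are natural for this core step. The algebraic route invokes Alon's Combinatorial Nullstellensatz on the Alon--Tarsi polynomial of an auxiliary oriented graph whose nonzero monomials encode linear-forest decompositions; one then needs a parity/orientation argument showing that the relevant coefficient is nonzero whenever the list sizes meet the threshold. The semi-random route runs an edge-coloring nibble on $H$: each edge proposes a uniformly random color from its current list, conflicts that would create a vertex of degree three in some color class or close a monochromatic cycle are withdrawn, and concentration (Talagrand's inequality, the lopsided LLL, or the entropy-compression framework of Molloy) keeps the residual lists balanced; a Reserve-type absorbing step, similar to the construction used in the present paper, would then deterministically correct the final $o(|L|)$ unsettled edges.

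The step I expect to be genuinely hard, and the reason the conjecture remains open, is that every current instantiation of either route loses a $\sqrt{\D}\,\mathrm{polylog}(\D)$ slack in the upper bound. The Nullstellensatz coefficient needed to match $\lceil (\D+1)/2\rceil$ exactly is not known to be nonzero beyond a handful of small cases, and the nibble method only delivers $\D/2 + O(\sqrt{\D}\log^4\D)$, which is precisely the Lang--Postle theorem. Bridging the gap---either via a structural theorem that peels off an \emph{exact} near-$2$-factor from each residual graph, or via a sharper concentration argument that eliminates the $\sqrt{\D}$ loss---is where a fundamentally new idea is required, and is the principal obstacle to the conjecture.
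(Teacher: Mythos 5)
The statement you were asked to prove is a \emph{conjecture} (the List Linear Arboricity Conjecture of An and Wu, strengthening Wu's earlier conjecture that $lla = la$), and the paper does not prove it --- it is quoted as background motivation for the digraph result $lla(D)\le\D+6\sqrt{\D}\log^4\D$, which carries a lower-order error term that the conjecture asserts should not be there. So there is no ``paper's proof'' to compare against. Your proposal correctly identifies the easy ingredients (the degree-counting lower bound $\lceil\D/2\rceil\le la(G)$ and the implication $la\le lla$ by specializing to constant lists, both matching the paper's own remarks), correctly locates the open content in the upper bound for $lla$, and correctly diagnoses that all current machinery (nibble + LLL/Talagrand + reserve absorption, or Alon--Tarsi/Combinatorial Nullstellensatz) loses a $\sqrt{\D}\,\mathrm{polylog}(\D)$ term, with Lang--Postle's $\D/2+3\sqrt{\D}\log^4\D$ being the state of the art. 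Your self-assessment that a genuinely new idea is required is the right conclusion.

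One small imprecision worth flagging: you write that ``the entire content of the conjecture, including the equality $la(G)=lla(G)$, is concentrated in the upper bound.'' That is not quite accurate. Establishing $lla(G)\le\lceil(\D+1)/2\rceil$ together with $\lceil\D/2\rceil\le la(G)\le lla(G)$ pins both quantities into a window of width at most $1$ (for even $\D$ the two ceilings differ by one), but it does \emph{not} force $la(G)=lla(G)$ --- for that you would also need $lla(G)\le la(G)$, which is a separate, still-open assertion (Wu's Conjecture 6.2.5). So even a sharp upper bound on $lla$ would resolve the Linear Arboricity Conjecture and the list version's upper bound, but the equality $la=lla$ remains an independent claim requiring its own argument.
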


The first asymptotic result of Conjecture~\ref{con:linear-arboricity} was obtained by Alon~\cite{Alo88} in 1988, stating that $la(G)\leq\frac{\D}{2} + O\left(\D \cdot \frac{\log \log \D}{\log \D}\right)$. The error term was improved to $O(\D^{2/3} \log^{1/3} \D)$ 
by Alon and Spencer~\cite{AS16} in 1992.  In 2020, Ferber, Fox and Jain~\cite{FFJ20} reduced this further to $O(\D^{2/3-\alpha})$, where $\alpha$ is a small positive constant. Recently, Christoph et al.~\cite{CDGHMM25} proved that $la(G)\leq\frac{\D}{2}+O(\log n)$ where $n$ is the number of vertices of $G$.
For Conjecture~\ref{conj:LLAC}, 
in 2021, Kim and Postle~\cite{KP17} proved that $lla(G)\leq \frac{\D}{2}(1+o(1))$. In 2023, Lang and Postle~\cite{LP23} proved the following theorem, which is currently the best known bound for both the Linear Arboricity Conjecture and the List Linear Arboricity Conjecture. 

\begin{theorem}[{Lang and Postle~\cite[Theorem 4]{LP23}}]\label{thm:Ordinary}
		Let $G$ be an undirected graph with maximum degree $\D$ sufficiently large, then $lla(G)\leq \frac{\D}{2} + 3\sqrt{\D} \log^4\D $.
\end{theorem}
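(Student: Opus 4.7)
The plan is to prove Theorem~\ref{thm:Ordinary} via a semi-random ``nibble'' argument combined with a final absorption step, which is the standard template for results of this form. First I would reduce to the near-regular case by a padding trick (embedding $G$ into a $\D$-regular auxiliary graph and extending any list assignment), so that one may assume every vertex has degree exactly $\D$ at the cost of inflating $lla$ by at most $O(1)$. Second, I would fix an orientation of $G$ in which every vertex has in-degree and out-degree within $1$ of $\D/2$; such an orientation exists by a routine Eulerian argument. The point of the orientation is structural: if the final coloring uses at most one in-edge and one out-edge of each color $c$ at each vertex $v$, then each color class has maximum degree at most $2$, which together with acyclicity forces each color class to be a linear forest.

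The heart of the proof is a random coloring procedure run in rounds. Set $k = \lceil \D/2 + 3\sqrt{\D}\log^4 \D \rceil$ and reserve a slice of roughly $\sqrt{\D}\log^4 \D$ colors for the final absorption step. In each round, for each still-uncolored edge independently pick a tentative color uniformly from its residual list, and retain the color only if it creates neither a conflict at either endpoint (two in-edges or two out-edges of the same color at the same vertex) nor a monochromatic cycle through $e$ in that color class. Using Talagrand's inequality, I would show that with positive probability at every vertex the residual list sizes and residual uncolored degrees both shrink by a common factor $1 - \Theta(1/\log^{O(1)} \D)$, so the invariants needed for the next round are preserved. After $O(\log^{O(1)} \D)$ rounds the residual graph has maximum degree at most $\sqrt{\D}/\log^{O(1)} \D$, which together with the reserved colors can be finished off by the Lovász Local Lemma.

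The main obstacle is the acyclicity constraint. The ``no conflict at a vertex'' condition is purely local and easy to track, but preventing a random color from closing a long cycle is a global condition whose probability is difficult to control directly. I would address this by bounding, for each edge $e$ and each color $c$, the expected number of bounded-length alternating paths in the current color-$c$ class that $e$ could complete into a cycle, and applying Talagrand's inequality with a carefully designed certifier/Lipschitz pair to concentrate this count around its mean; edges whose assignment would close such a cycle are uncolored at a rate compatible with the desired invariants. The list setting (non-uniform lists) removes the symmetry that makes the uniform-list version clean, so one must simultaneously track, for every vertex and every color, both the residual list size and the residual degree, and couple their concentration. This simultaneous bookkeeping is what pins down the precise $3\sqrt{\D}\log^4 \D$ error term, and is the technical heart of Lang and Postle's argument that I would have to reproduce.
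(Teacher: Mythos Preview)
Your overall template (reserve a slice of colors, run an iterative nibble with Talagrand concentration, combine the per-vertex bounds via the Lov\'asz Local Lemma, finish with Lemma~\ref{lem:finisher}) is correct, but the structural device you choose differs from Lang and Postle's. They do \emph{not} orient $G$. Instead they double each list to $\cL(e)\times\{1,2\}$, declare $(c,1)$ and $(c,2)$ to be \emph{twins}, and seek a proper $1$-edge-coloring from the doubled lists with no bicolored cycle in any pair of twin colors; merging twins then yields a $2$-edge-coloring from $\cL$ without monochromatic cycles, hence a linear-forest decomposition. The paths monitored in step~\ref{step:cycle-tweak} are accordingly \emph{dangerous paths} that alternate between a color and its twin. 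Your orientation approach is a legitimate alternative---it is essentially the device the present paper adopts for digraphs---but then the objects to track are monochromatic \emph{directed} paths, not ``alternating paths in the current color-$c$ class,'' a phrase that conflates the two pictures. The twin-color trick gives doubled list size $\ge \D+6\sqrt{\D}\log^4\D$ against color-degree $\le \D$, which plugs directly into Setup~\ref{Setup} with $L_0=\D+3\sqrt{\D}\log^4\D$ and $N_0=\D$; via orientation both quantities are halved, and recovering the constant $3$ (rather than something near $6/\sqrt{2}$) would require redoing the numerics rather than quoting them.

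Two smaller corrections: the padding to a regular graph is unnecessary (the analysis only uses upper bounds on color-degrees, never lower bounds), and the stopping criterion is that $L_i$ drops below a polylog threshold while $R_{i_0}$---the number of uncolored neighbors sharing a given \emph{reserved} color---stays small enough for Lemma~\ref{lem:finisher}; bounding the residual maximum degree is neither what is proved nor what is needed.
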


Alon and Spencer~\cite{AS16} deduced $la(G)\leq\frac{\D}{2} + O(\D^{2/3} \log^{1/3} \D)$
by proving it for digraphs.
This motivates our investigation of the directed version of Conjecture~\ref{conj:LLAC}. Let $D=(V(D),A(D))$ be a digraph.
For every vertex $v \in V(D)$, let $N^+(v)$ ($N^-(v)$ respectively) denote the set of vertices $u$ such that $vu\in A(D)$ ($uv\in A(D)$ respectively). 
Then let $d^+(v)=|N^+(v)|$ and $d^-(v)=|N^-(v)|$. 
Let the maximum degree $\Delta(D)$ of $D$ be $\max \{\Delta^+(D),\Delta^-(D)\}$, where $\Delta^+(D) := \max_{v \in V} d^+(v)$ and $\Delta^-(D) := \max_{v \in V} d^-(v)$ denote the maximum out-degree and in-degree of $D$, respectively.
A \emph{directed linear forest} is a disjoint union of directed paths. The linear arboricity $la(D)$ of a digraph $D$, introduced by Nakayama and P\'{e}roche~\cite{NP87}, is the minimum number of directed linear forests required to partition the arc set of $D$. They conjectured that $la(D)\leqslant \Delta(D)+1$ for all digraphs $D$. 

For $d$-regular digraphs $D$ (where $d^+(v)=d^-(v)=d$ for all $v \in V(D)$), we have $la(D)\geqslant \left\lceil \frac{d|V(D)|}{|V(D)|-1}\right\rceil\geqslant d+1$, since each  forest spans at most $|V(D)|-1$ arcs. As any digraph can be embedded into a $d$-regular digraph by adding arcs and vertices, the Nakayama-P\'{e}roche conjecture is equivalent to the conjecture that $la(D)=d+1$ for $d$-regular digraphs $D$. 
In 2017, He et al.~\cite{HLBS17} discovered counterexamples: the symmetric complete digraphs $K_3^*$ and $K_5^*$ satisfy $la(K_3^*)=4=d+2$ ($d=2$) and $la(K_5^*)=6=d+2$ ($d=4$). Consequently, they conjectured that $la(D)=d+1$ for all $d$-regular digraphs $D$ except $K_3^*$ or $K_5^*$.

Similar to the graph case, a \emph{linear $L$-coloring} of a digraph $D$ is a map $\phi$ defined on $A(D)$ such that $\phi(e)\in L(e)$ for every $e\in A(D)$, and for every color $c$ in the range of $\phi$, $\phi^{-1}(c)$ induces a directed linear forest. The \emph{list linear arboricity} of $D$, denoted by $lla(D)$, is the minimum $k$ such that for every list assignment $L$ to arcs of $D$ with $|L(e)|\ge k$ for every $e\in A(D)$, there exists a linear $L$-coloring of $D$. 
Our main result generalizes Theorem~\ref{thm:Ordinary} to digraphs in the list setting with a matching error term.

\begin{theorem}\label{thm:main}
Let $D$ be a digraph with $\D=\Delta(D)$ sufficiently large, then $lla(D) \le \D+6 \sqrt{\D} \log^4\D $.
\end{theorem}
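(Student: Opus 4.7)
The plan is to adapt the probabilistic framework of Lang and Postle~\cite{LP23}, which underlies Theorem~\ref{thm:Ordinary}, to the digraph setting. The preamble's parallel theorem environments \emph{Random coloring procedure for undirected graphs} and \emph{Random coloring procedure for digraphs} already foreshadow this structural correspondence.

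\textbf{Step 1 (Directed random coloring procedure).} Set $k := \Delta + 6\sqrt{\Delta}\log^4 \Delta$ and assume $|L(e)| \ge k$ for every arc $e$. In each round, every still-uncolored arc independently samples a uniformly random color from an active sublist of its list. A sampled color $c$ on arc $uv$ is retained only when (i) no other out-arc at $u$ samples $c$, (ii) no other in-arc at $v$ samples $c$, and (iii) retention does not close a directed cycle. Arcs whose sampled color is rejected return to the pool for the next round with their active sublists suitably updated.

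\textbf{Step 2 (LLL analysis and iteration).} For each vertex $v$ and each color $c$, the numbers of in-arcs and of out-arcs at $v$ sampling $c$ are each distributed approximately as $\mathrm{Bin}(\Delta, 1/k)$, with mean $\Delta/k < 1$. Applying the Lov\'asz Local Lemma to the bad events — at least two in-arcs or at least two out-arcs retained with the same color at some vertex, or the closing of a directed cycle — together with concentration inequalities on the relevant global counts, shows that each round colors a large fraction of the remaining arcs. A bounded number of iterations then completes the coloring within the budget of $k$ colors.

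\textbf{Step 3 (Matching the error term).} The leading constant doubles compared with Theorem~\ref{thm:Ordinary} because every color class must have both in-degree and out-degree at most $1$ at each vertex, not merely total degree at most $2$; this halves the number of arcs that a single color can absorb per vertex and hence doubles the asymptotic budget from $\Delta/2$ to $\Delta$. The constant $6 = 2\cdot 3$ in the second-order term similarly arises from having, at every vertex, two independent concentration-slack requirements (one for in-arcs and one for out-arcs), each inheriting the same $3\sqrt{\Delta}\log^4\Delta$ tolerance as in the undirected analysis.

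\textbf{Main obstacle.} The hardest step is expected to be controlling directed cycle formation during the procedure. In the undirected setting, one employs a reserve mechanism (cf.\ the macros $\Res$, $\K$, $\R$ in the preamble) that tracks maximal paths being grown in each color class. In the digraph setting, this mechanism must distinguish arc directions at every vertex, and must treat opposite pairs $uv$, $vu$ carefully, since they always form a directed $2$-cycle and so may never share a color. Adapting the reserve/retain argument while preserving the sparse dependency structure required for the LLL should dominate the technical work; once in place, the remaining analysis is a directed echo of Lang and Postle's, yielding Theorem~\ref{thm:main}.
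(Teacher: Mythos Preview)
Your high-level plan --- adapt Lang--Postle's iterative random coloring procedure to digraphs, enforce in/out-degree $\le 1$ per color at each vertex, and block monochromatic directed cycles --- is the same as the paper's. But several structural ingredients of the Lang--Postle argument are either missing or misread in your sketch, and without them the analysis will not close with the stated error term.

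First, you have every uncolored arc sample a color in each round. The paper (like~\cite{LP23}) is a \emph{nibble}: each round only \emph{activates} an arc with probability $p=1/4$ before assigning it a tentative color. This is what keeps the color-neighborhood sizes $N_i$ and list sizes $L_i$ evolving in lockstep ($N_i/L_i$ stays below $1$) across the $i_0$ iterations, and is essential for the $\sqrt{\Delta}\log^4\Delta$ error. A one-shot ``everyone samples'' round does not give this control. Second, you misidentify the role of $\Res$: it is not a path-tracking device. Before any coloring, a set $\Res(v)$ of colors is set aside at each vertex so that afterwards the residual uncolored subgraph can be finished by a separate proper edge-coloring from $\Res(e)=\cL(e)\cap\Res(u)\cap\Res(v)$ via the ``finishing blow'' lemma; the nibble itself uses only $L_0(e)=\cL(e)\setminus(\Res(u)\cup\Res(v))$. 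Cycle prevention is handled instead by tracking \emph{suspicious directed paths} $P(v,u,c)$ of bounded uncolored length and deleting $c$ from $L(uv)$ whenever such a path becomes monochromatic --- this, not $2$-cycles, is the delicate part, and its dependency radius (uncolored length $\le \ell=2\log\Delta$) is what makes the LLL go through. Finally, your accounting for the constant $6$ is off: the paper runs exactly the same parameter recursion as Lang--Postle's doubled problem (so there is no extra factor from ``two concentration requirements''); the $6\sqrt{\Delta}\log^4\Delta$ splits as $3\sqrt{\Delta}\log^4\Delta$ lost to the reservation step plus $3\sqrt{\Delta}\log^4\Delta$ of initial slack in $L_0$, exactly as in~\cite{LP23} but without the twin-color halving at the end.
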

If applying our result to a $2d$-regular graph $G$, then we know any Eulerian orientation of $G$ admits an edge decomposition into at most $d+6 \sqrt{d} \log^4 d$ directed linear forests. 

The rest of this paper is organized as follows. In Section~\ref{sec:nibble}, we elaborate on our methodology and deduce Theorem~\ref{thm:main} from Claim~\ref{cla:nibbler}. In Section~\ref{sec:nibbler}, we present the proof of Claim~\ref{cla:nibbler}.

\section{Proof of the main result}\label{sec:nibble}
In this section, we give the proof of Theorem~\ref{thm:main}, which builds on Lang and Postle's framework~\cite{LP23}. And their approach builds on Molloy and Reed's pioneering work~\cite{MR13} for the List Coloring Conjecture. 
So we first outline Lang and Postle's proof structure before adapting it to digraphs. 

For undirected graph $G$ with sufficiently large maximum degree $\D$, Lang and Postle~\cite{LP23} proved that $lla(G)\leq \frac{\D}{2} + 3 \sqrt{\D} \log^4\D $ via the R\"odl's nibble method.
Let $\cL(e)$ be a list assignment with $\cL(e)\ge \frac{\D}{2} + 3 \sqrt{\D} \log^4\D $ for every edge $e$ of $G$.
They consider the list product  $\cL(e)\times \{1,2\}$ ($|\cL(e)\times \{1,2\}| \geq \D+ 6\sqrt{\D} \log^4 \D$) and say that  
colors $(c,1)$ and $(c,2)$ are \emph{twins}. 

Recall the following related definitions. 
A \emph{$t$-edge-coloring} of $G$ from a list assignment $L(e)$ is an assignment of colors to the edges of $G$ such that each edge receives a color from its list and no vertex is incident to more than $t$ edges of the same color.
An edge-coloring is \emph{partial} for $G$, if it is an edge coloring of a subgraph of $G$.
  
They first reserve some colors from lists for each edge, which will be used later. Then they use a random coloring procedure to iteratively color the edges to get a partial $1$-edge-coloring from $\cL(e)\times \{1,2\}$ without bicolored cycles between twin colors. 
Then by merging twin colors, this partial $1$-edge-coloring yields a partial $2$-edge-coloring of $G$ from $\cL(e)$ without monochromatic cycles. 
To prevent monochromatic cycles from emerging and analyze the random coloring procedure, the following definitions are needed and stated in the context of lists $L(e)\subset \cL(e)\times \{1,2\}$ of (yet unused) colors and a partial edge-coloring $\gamma$ of $G$.
\begin{definition}[Dangerous paths]
	Let $\gamma$ be a partial $1$-edge-coloring of $G$.
	Consider vertices $u,v$ and twin colors $c,c'$.
	A path $P$ is \emph{dangerous} for $(u,v,c)$ under $\gamma$, if 
	\begin{itemize}
		\item $P$ starts with $u$ and ends with $v$  and
		\item the edges of $P$ are colored alternately with $c$ and $c'$, starting and ending with $c'$.
	\end{itemize}
\end{definition}
So if $P$ is dangerous for $(u,v,c)$, then $uv$ cannot be colored with $c$.
To monitor dangerous paths for $(u,v,c)$, Lang and Postle monitor a family $P(u,v,c)$ of paths that are candidates for becoming dangerous.
	Given  a partial $1$-edge-coloring $\gamma$ of $G$ from some lists $L(e)$, the \emph{color neighbors} of a vertex $v$ and a color $c$, denoted by $N_{G,L,\gamma}(v,c)$, is the set of uncolored edges $f$ incident to $v$ with $c \in L(f)$.
For convenience, they denote the lists by $L(e)$ and color neighbors by $N(v,c)$.
The iterative random coloring procedure in~\cite{LP23} is carried out as follows.
\begin{procedure*undirected}\label{procedure}~
	\begin{enumerate}[(I)]
		\item \label{step:activate} \textbf{Edge activation.} {Activate} each uncolored edge with probability $\p=\frac{1}{4}$.
		
		\item \label{step:assign} \textbf{Assign colors.} Assign to each activated edge $e$ a color chosen uniformly at random from $L(e)$.
		
		\item \label{step:unassign-and-coinflip} \textbf{Resolve conflicts.}
		Uncolor every edge $e$, which is assigned the same color as an incident edge.
		If $e$ is assigned color $c$ and no neighbor was assigned $c$, then uncolor $e$ with probability $\Eq(e,c)$. 
		\item \label{step:update-lists-and-coinflip}  \textbf{Update lists.}
		For every vertex $v$ and color $c$, if $c$ is retained by some edge in $N(v,c)$, then remove $c$ from the lists $L(f)$ of all other edges $f \in N(v,c)$.
		If $c$ is not retained by an edge in $N(v,c)$, then with probability $\Vq(v,c)$ remove $c$ from the lists $L(f)$ of all edges $f \in N(v,c)$.
		\item \label{step:cycle-tweak}  \textbf{Prevent dangerous paths.}  For every uncolored edge $uv$ and color $c \in L(uv)$, if  there is a path in $P(u,v,c)$ that is dangerous for $(u,v,c)$ after step~\ref{step:assign}, then remove $c$ from $L(uv)$. 
		If $e$ was assigned $c$ also uncolor $uv$.
		(If $c$ was already removed in step~\ref{step:unassign-and-coinflip} or~\ref{step:update-lists-and-coinflip}, do nothing.)
	\end{enumerate}
\end{procedure*undirected}

At each iteration, $0 \leq \Eq(e,c), \Vq(v,c) \leq 1$, so these terms play the role of a coin flip probability.

They obtain a partial $1$-edge-coloring of $G$ from the list product $\cL(e)\times \{1,2\}$ by iteratively coloring the edges using this random coloring procedure until most edges of the graph have been colored. 
Then by mapping twin colors $(c,1)$ and $(c,2)$ to $c$, this partial $1$-edge-coloring translates to a partial $2$-edge-coloring of $G$ from $\cL(e)$ without any monochromatic cycles. 
Moreover, they can finish this by finding a $1$-edge-coloring for the remaining uncolored edges of $G$ from the colors that were reserved beforehand. 
So when combined with the partial $2$-edge-coloring, each color class is a linear forest.

\subsection{Modifying the random coloring procedure}\label{sec:Modifying the random coloring procedure}
We now return to our proof of Theorem~\ref{thm:main}.
Let $D$ be a digraph.
Assume $\D = \Delta(D)$ is sufficiently large.
Let $\cL$ be a list assignment with 
$|\cL(e)| \geq \D+ 6\sqrt{\D} \log^4 \D$ for every arc $e$ of $D$. 
Consider any sublist assignment $L(e) \subset \cL(e)$ to arcs of $D$.
An \emph{$(s,t)$-edge-coloring} of $D$ from $L(e)$ is an assignment of colors to the arcs of $D$ such that each arc $e$ receives a color from $L(e)$ and each vertex $v$ is incident to at most $s$ arcs oriented toward $v$ and at most $t$ arcs oriented away from $v$ of the same color.
Note that, if all lists are the same, then a decomposition of $D$ into directed linear forests is equivalent to a $(1,1)$-edge-coloring that does not induce any monochromatic directed cycles.

We adapt the proof of Lang and Postle to digraphs to prove Theorem~\ref{thm:main}. So we follow their framework to write our proof. 
The core difference is that, instead of using twin colors to produce a $2$-edge-coloring, we modify \ref{step:unassign-and-coinflip} \ref{step:update-lists-and-coinflip} of their random coloring procedure to guarantee that 
the outcome is a partial $(1,1)$-edge-coloring.

To prevent monochromatic directed cycles in the random coloring procedure, we will remove color $c$ from the list of an arc $uv$ whenever a directed path from $v$ to $u$ becomes monochromatic with $c$. To monitor such monochromatic paths, we monitor certain \emph{suspicious directed paths}, which are candidates for becoming monochromatic.

\begin{definition}[Suspicious directed paths]\label{def:suspicious1}
	Let $\gamma$ be a partial $(1,1)$-edge-coloring of $D$ from some lists $L(e)\subset \cL(e)$, and let $c$ be a color. 
	A directed path $P=(v_1,\dots,v_s)$ is \emph{suspicious} for $c$ if the following holds:
	\begin{itemize}
		\item If $v_iv_{i+1}$ is uncolored, then $c \in L(v_iv_{i+1})$.
		\item If $v_iv_{i+1}$ is colored, then $c = \gamma(v_iv_{i+1})$.
	\end{itemize}
\end{definition}
The \emph{uncolored length} of $P$ is the number of its uncolored arcs.
For a directed path from $v$ to $u$ that is suspicious for $c$, if its uncolored length is less than $\ell=2\log \D$, then we will monitor the whole path. Let $P(v,u,c;k)=P_{L,\gamma}(v,u,c;k)$ be the union of directed path from $v$ to $u$ that is suspicious for $c$ and of uncolored length $k$. If $k$ is at least $\ell$, then to better control the dependencies of events related to $uv$, we will forget $v$ and just monitor its later half with uncolored length exactly $\ell$ and first arc uncolored. Let $P(u,c;\ell)=P_{L,\gamma}(u,c;\ell)$ be the set of such paths, i.e., all directed path that ends at $u$, suspicious for $c$, first arc uncolored and of uncolored length $\ell$. 

To state and analyze our random coloring procedure, we also need the following definition.
\begin{definition}[color in-neighbors, out-neighbors]
	Given  a partial $(1,1)$-edge-coloring $\gamma$ of $D$ from some lists $L(e)\subset \cL(e)$, we define the \emph{color in-neighbors} (\emph{color out-neighbors} respectively) of a vertex $v$ and a color $c$, denoted by $N^-_{D,L,\gamma}(v,c)$ ($N^+_{D,L,\gamma}(v,c)$ respectively), to be the set of uncolored arcs $uv$ ($vu$ respectively) incident to $v$ with $c \in L(uv)$ ($L(vu)$ respectively). 
\end{definition}

Let $\gamma$ be a partial $(1,1)$-edge-coloring and $L(e)$ be the lists of $D$. 
For convenience, we denote color in-neighbors by $N^-(v,c)=N^-_{D,L,\gamma}(v,c)$ and color out-neighbors by $N^{+}(v,c)=N^{+}_{D,L,\gamma}(v,c)$.
The random coloring procedure involves two coin flips that will help us to bound some of the involved terms uniformly. See $Eq$ and $Vq$ defined in~\eqref{equ:Eq}, \eqref{equ:Vq^+} and \eqref{equ:Vq^-} later.

Now we color (some of) the arcs of $D$ iteratively using the following random process:
\begin{procedure*}\label{procedure1}~

The first two steps are the same as Lang and Postle's setup.	
	\begin{enumerate}[(I)]
		\item \label{step:activate1} \textbf{Arc activation.} Activate each uncolored arc with probability $\p=1/4$.
		
		\item \label{step:assign1} \textbf{Assign colors.} Assign to each activated arc $e$ a color chosen uniformly at random from $L(e)$.		
	\end{enumerate}
\end{procedure*}
 
The purpose of the following modification is to ensure that the partial edge coloring is a partial $(1,1)$-edge-coloring.
\begin{enumerate}[(I)] \addtocounter{enumi}{2}
		\item \label{step:unassign-and-coinflip1} \textbf{Resolve conflicts.}
	Uncolor every arc $e=uv$, which is assigned the same color as some arc in $N^{+}(u,c)\cup N^{-}(v,c) \backslash \{e\}$.
	If $e$ is assigned color $c$ and no arc in $N^{+}(u,c)\cup N^{-}(v,c) \backslash \{e\}$ is assigned $c$, then uncolor $e$ with probability $\Eq(e,c)$. 

If an arc is not uncolored, then we say that it \emph{retains} its color.
    
	\item \label{step:update-lists-and-coinflip1}  \textbf{Update lists.}
	For every vertex $v$ and color $c$, if $c$ is retained by some arc in $N^{+}(v,c)$ ($N^{-}(v,c)$ respectively), then remove $c$ from the lists $L(f)$ of all other arcs $f \in N^{+}(v,c)$ ($N^{-}(v,c)$ respectively).
	If $c$ is not retained by any arc in $N^{+}(v,c)$ ($N^{-}(v,c)$ respectively), then with probability $\Vq^{+}(v,c)$ ($\Vq^{-}(v,c)$ respectively)  remove $c$ from the lists $L(f)$ of all arcs $f \in N^{+}(v,c)$ ($N^{-}(v,c)$ respectively).
\end{enumerate}

The following modification deals with monochromatic directed paths. 
For every arc $uv$ uncolored in $\gamma$ and color $c \in L(uv)$, if a directed path from $v$ to $u$ becomes monochromatic with color $c$ after step~\ref{step:assign1}, then it has some arc uncolored before this step. 
So either there is some $1 \leq k <\ell= 2 \log \D$, such that $P \in P(v,u,c;k)$ or $P$ ends with a path in $P(u,c;\ell)$. 
Therefore it is enough to focus on monochromatic directed paths that correspond to 
\begin{align}\label{equ:monochromatic-paths}
	P(v,u,c) := \bigcup_{k=1}^{\ell-1} P(v,u,c;k) \cup P(u,c;\ell).
\end{align}
\begin{enumerate}[(I)] \addtocounter{enumi}{4}
	\item \label{step:cycle-tweak1}  \textbf{Prevent monochromatic direct cycles.}  For every arc $uv$ uncolored in $\gamma$ and color $c \in L(uv)$, if there is a directed path in $P(v,u,c)$ that becomes monochromatic with $c$ after step~\ref{step:assign1}, then remove $c$ from $L(uv)$. 
	If $uv$ was assigned $c$ also uncolor $uv$.
	(If $c$ was already removed in step~\ref{step:unassign-and-coinflip1} or~\ref{step:update-lists-and-coinflip1}, do nothing.)
\end{enumerate}

It is clear that if we start with $\gamma$ being an empty coloring, and run this random coloring procedure iteratively, we will get a partial $(1,1)$-edge-coloring without monochromatic directed cycles.
Before showing more details of this procedure, we first bound the number of suspicious directed paths. It is essentially the same proof as Lang and Postle's~\cite[Observation 8]{LP23}. For completeness, we include its proof here.

\begin{observation}\label{obs:number-suspicious}
Let $\gamma$ be a partial $(1,1)$-edge-coloring of $D$ from some lists $L(e)\subset \cL(e)$ at the beginning of the $i$-th iteration.
	Suppose there is a constant $N$ such that 
    $|N^-_{D,L,\gamma}(w,c)| \leq N$ for every vertex $w$ and color $c$.
	Then we have $|P_{L,\gamma}(v,u,c;k)|\leq N^{k-1}$ and $|P_{L,\gamma}(u,c;k)|\leq N^{k}$ for all vertices $u,v$, colors $c$ and integers $k$.
\end{observation}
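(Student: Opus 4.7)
The plan is to establish both bounds simultaneously by a backward-building argument that exploits a rigid structure suspicious paths inherit from the list invariant of step~\ref{step:update-lists-and-coinflip1}. At the start of any iteration, if a vertex $w$ has a colored outgoing (respectively incoming) $c$-arc in $\gamma$, then a previous execution of step~\ref{step:update-lists-and-coinflip1} removed $c$ from the lists of every other uncolored arc leaving (respectively entering) $w$, and this is preserved because lists never grow. Since $\gamma$ is a $(1,1)$-edge-coloring without monochromatic directed cycles, its $c$-colored arcs form a disjoint union of directed paths; the invariant then forces every uncolored arc $xy$ with $c\in L(xy)$ to go from the tail of one such $c$-colored path (a vertex with no outgoing $c$-arc in $\gamma$) to the head of another (a vertex with no incoming $c$-arc in $\gamma$).

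Consequently, any suspicious path for $c$ from $v$ to $u$ of uncolored length $k$ has the following rigid form: a deterministic forward walk along the $c$-colored path containing $v$ from $v$ to its tail, then uncolored arc $e_1$, then a deterministic forward walk through a $c$-colored path from head to tail, then $e_2$, \dots, then uncolored arc $e_k$ ending at the head of the $c$-colored path containing $u$, and finally a deterministic forward walk from that head to $u$. Every colored stretch is determined by its endpoints, so the entire path is determined by the sequence $(e_1,\dots,e_k)$ of uncolored arcs.

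I would then enumerate such paths by choosing $e_k, e_{k-1}, \dots, e_1$ in this backward order. The arc $e_k$ must end at the head of the $c$-colored path containing $u$ (determined by $u$), so there are at most $|N^-_{D,L,\gamma}(\cdot,c)|\le N$ choices for it; each such choice fixes the tail of $e_k$ and hence the head at which $e_{k-1}$ must end, again giving at most $N$ choices, and so on. For $|P_{L,\gamma}(u,c;k)|$, the stipulation that the first arc is uncolored leaves the tail of $e_1$ unconstrained, so $e_1$ also contributes at most $N$ choices, for a total of $N^k$. For $|P_{L,\gamma}(v,u,c;k)|$, the tail of $e_1$ must equal the (fixed) tail of the $c$-colored path containing $v$, so $e_1$ is pinned to at most one arc, giving the total $1\cdot N^{k-1}=N^{k-1}$.

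The main subtlety is recognizing and invoking the list invariant from step~\ref{step:update-lists-and-coinflip1}, which is precisely what rules out uncolored arcs originating or terminating at interior vertices of $c$-colored directed paths; without it the count could blow up (for instance, a $c$-colored chain $x_0\to x_1\to x_2$ together with uncolored arcs $x_0u,x_1u,x_2u$ with $c$ in each of their lists would give three suspicious paths from $x_0$ to $u$ of uncolored length $1$, violating the bound $N^0=1$).
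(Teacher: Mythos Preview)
Your proposal is correct and takes essentially the same approach as the paper: both rely on the list invariant maintained by step~\ref{step:update-lists-and-coinflip1} (a vertex with a colored incoming/outgoing $c$-arc has empty $N^-(\cdot,c)$/$N^+(\cdot,c)$), which forces each uncolored arc in a suspicious path to connect the terminal vertex of one maximal $c$-colored segment to the initial vertex of another, making all colored stretches deterministic. The only cosmetic difference is packaging: the paper phrases the count as an induction on $k$ (each path in $P(u,c;k)$ extends a path in $P(u,c;k-1)$ in at most $N$ ways), while you unroll this into an explicit backward enumeration of the uncolored arcs $e_k,\dots,e_1$; the two arguments are the same step repeated.
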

\begin{proof}
We prove $|P_{L,\gamma}(u,c;k)|\leq N^{k}$ by induction on $k$.
Since $\gamma$ is a partial $(1,1)$-edge-coloring of $D$, there exists a unique edge maximal path $P$ ends at $u$ and colored with $c$. Let $P'\in P_{L,\gamma}(u,c;1)$ and let $xw$ be its first arc. By definition we know $xw\in N^-_{D,L,\gamma}(w,c)$ is uncolored and all other arcs of $P'$ are colored with $c$, so $w\in V(P)$. 
By step~\ref{step:update-lists-and-coinflip1} of our random coloring procedure, we know for each vertex $v$, if there exist an arc $uv$ colored with $c$, then $N^-_{D,L,\gamma}(v,c)=\emptyset$. Therefore $w$ must be the first vertex of $P$, otherwise $N^-_{D,L,\gamma}(w,c)=\emptyset$.
Then we have the induction base case $|P_{L,\gamma}(u,c;1)|\le |N^-_{D,L,\gamma}(w,c)| \le N$. 
For the induction step, note that for $k>1$, each path in $P_{L,\gamma}(u,c;k)$ is an extension of a path in $P_{L,\gamma}(u,c;k-1)$.
Using the same argument as before, we see that a path in $P_{L,\gamma}(u,c;k-1)$ extends to at most $N$ paths in $P_{L,\gamma}(u,c;k)$. So we have $|P_{L,\gamma}(u,c;k)|\leq N^{k}$.

For $P_{L,\gamma}(v,u,c;1)$, there exists a unique edge maximal path $Q$ starts at $v$ and colored with $c$. Similar to the above analysis on $P_{L,\gamma}(u,c;1)$, by step~\ref{step:update-lists-and-coinflip1} of our random coloring procedure, we know if $P'\in P_{L,\gamma}(v,u,c;1)$ with $xw$ uncolored, then $P'-xw = Q \cup P$. So $|P_{L,\gamma}(v,u,c;1)|\le 1$. For $k>1$, we see a path in $P_{L,\gamma}(v,u,c;k)$ is an extension of a path in $P_{L,\gamma}(u,c;k-1)$, and using the same argument as $P_{L,\gamma}(v,u,c;1)$, we see a path in $P_{L,\gamma}(u,c;k-1)$ can extend to at most $1$ path in $P_{L,\gamma}(v,u,c;k)$, since it must extend to the last vertex of $Q$. So $|P_{L,\gamma}(v,u,c;k)|\leq |P_{L,\gamma}(u,c;k-1)| \le N^{k-1}$.
\end{proof}

\subsection{Adapting former work}
In the following, we show more details of the iterative random coloring procedure. The Lov\'asz Local Lemma~\cite{EL75} plays an important roll on analyzing this procedure.

\begin{lemma}[{Lov\'asz Local Lemma}]\label{lem:LLL}
	Let $A_1,\dots,A_n$ be events in an arbitrary probability space.
	Suppose that each event $A_i$ is mutually independent of all but at most $d$ other events, and that $\Pr(A_i) \leq p$ for all $1 \leq i \leq n$.
	If $e p (d+1) \leq 1$, then $\Pr\left( \bigwedge_{i=1}^n \overline{A_i} \right) >0$.
\end{lemma}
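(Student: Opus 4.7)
The plan is to give the classical induction argument for the Lovász Local Lemma. Set $x := 1/(d+1)$. The key analytic input is the elementary inequality $(1 - 1/(d+1))^{d} \ge 1/e$, which, combined with the hypothesis $ep(d+1) \le 1$, yields
\begin{equation*}
    x(1-x)^{d} \;\ge\; \frac{1}{e(d+1)} \;\ge\; p \;\ge\; \Pr(A_i).
\end{equation*}
So it suffices to prove the asymmetric form: if every $A_i$ is mutually independent of all $A_j$ outside some dependency set $\Gamma(i)$ of size at most $d$, and $\Pr(A_i) \le x(1-x)^{|\Gamma(i)|}$, then $\Pr(\bigwedge_i \overline{A_i}) \ge (1-x)^n > 0$.

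The heart of the argument is the following conditional bound, proved by induction on $|S|$: for every $i$ and every $S \subset \{1,\dots,n\} \setminus \{i\}$,
\begin{equation*}
    \Pr\Bigl(A_i \,\Big|\, \bigwedge_{j \in S} \overline{A_j}\Bigr) \;\le\; x.
\end{equation*}
For the inductive step I would split $S = S_1 \cup S_2$ with $S_1 = S \cap \Gamma(i)$ and $S_2 = S \setminus \Gamma(i)$, then expand the conditional probability as a ratio. The numerator is bounded by $\Pr(A_i \mid \bigwedge_{S_2} \overline{A_j}) = \Pr(A_i) \le x(1-x)^{d}$ using mutual independence from the events indexed by $S_2$. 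The denominator is bounded below by $\prod_{j \in S_1}(1-x)$ by applying the chain rule and invoking the inductive hypothesis on strictly smaller sets $S' \subsetneq S$. The ratio then collapses to at most $x$.

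With this conditional bound in hand, a direct application of the chain rule gives
\begin{equation*}
    \Pr\!\left(\bigwedge_{i=1}^{n} \overline{A_i}\right) \;=\; \prod_{i=1}^{n} \Pr\!\left(\overline{A_i} \,\Big|\, \bigwedge_{j<i} \overline{A_j}\right) \;\ge\; (1-x)^{n} \;>\; 0,
\end{equation*}
which is the desired conclusion. The only place that requires genuine care is the inductive step, since one must verify that both the numerator and denominator conditional probabilities are well-defined; this is handled by noting that the inductive hypothesis forces $\Pr(\bigwedge_{j \in S'} \overline{A_j}) > 0$ for every $S' \subset S$, so no conditioning event has probability zero. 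Everything else is routine manipulation of probabilities.
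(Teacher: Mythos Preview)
Your proof is the standard induction argument for the symmetric Lov\'asz Local Lemma and is correct. The paper does not actually prove Lemma~\ref{lem:LLL}; it merely states it as a classical result with a citation to Erd\H{o}s and Lov\'asz~\cite{EL75}, so there is nothing to compare against beyond noting that your argument is exactly the textbook one.
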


It can be shown with the Lov\'asz Local Lemma that with positive probability, we can repeat the above random coloring procedure until most arcs of the digraph have been colored.
Once this is done, we can complete the coloring with the following lemma using a fresh set of colors that was reserved beforehand. Although the following lemma is stated for graphs in the original work, the proof provided there also holds for multigraphs. For completeness, we include its proof here.

\begin{lemma}[Finishing blow {\cite[Theorem 4.3]{MR13}}]\label{lem:finisher}
	Let $G$ be a multigraph with an assignment of lists $L(e)$ to the edges such that $|L(e)| \geq L$ for some constant $L$.
	Suppose that there is a constant $N$ such that for every edge $e$ and color $c$, there are at most $N$ edges $f$ incident with $e$ such that $c \in L(f)$.
	If $L \geq 8 N$, then $G$ has a $1$-edge-coloring from the lists $L$.
\end{lemma}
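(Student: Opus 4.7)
The plan is to run a standard symmetric Lov\'asz Local Lemma argument. Color each edge $e$ independently and uniformly at random from $L(e)$, and show that with positive probability no two incident edges receive the same color; such a random coloring is automatically the desired $1$-edge-coloring. As a preliminary reduction I would delete excess colors so that $|L(e)|=L$ exactly for every edge $e$: the hypothesis ``for every edge and color $c$, at most $N$ incident edges have $c$ in their list'' is preserved under shrinking lists.

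Rather than defining one bad event per edge, I would index bad events by conflicts. For each unordered pair $\{e,f\}$ of distinct incident edges and each color $c\in L(e)\cap L(f)$, let $B_{e,f,c}$ be the event that both $e$ and $f$ are assigned $c$. Avoiding every $B_{e,f,c}$ is equivalent to producing a $1$-edge-coloring, and by independence of the color assignments $\Pr(B_{e,f,c})=1/L^2$. Since $B_{e,f,c}$ depends only on the colors of $e$ and $f$, it is mutually independent of every $B_{e',f',c'}$ with $\{e',f'\}\cap\{e,f\}=\emptyset$. For each $g\in\{e,f\}$, the number of events involving $g$ is at most
\[ \sum_{c'\in L(g)}\bigl|\{h\text{ incident to }g:\; c'\in L(h)\}\bigr|\;\leq\; L\cdot N \]
by the hypothesis on $N$, so the dependency degree of $B_{e,f,c}$ is at most $2LN$. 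Therefore, for $L\geq 8N$ the symmetric LLL condition becomes $\mathrm{e}\cdot(2LN+1)/L^2\leq 17\mathrm{e}/64<1$, and Lemma~\ref{lem:LLL} yields a valid $1$-edge-coloring with positive probability.

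The step I expect to require the most care is the choice of indexing for the bad events. A naive version taking $A_e=$ ``$e$ clashes with some incident edge'' gives $\Pr(A_e)\leq N/L$, but its dependency degree scales like the number of edges at line-graph distance at most two, which is of order $(LN)^2$, forcing $L$ to be polynomial in $N$. Indexing instead by the finer triple $(e,f,c)$ trades a factor of $L$ in the probability against essentially the same factor in the dependency degree, so that $p(d+1)$ scales linearly in $N/L$, which is precisely what lets $L\geq 8N$ suffice.
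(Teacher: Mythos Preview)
Your proposal is correct and is essentially identical to the paper's proof: both truncate lists to size exactly $L$, index bad events by triples $(e,f,c)$ with $\Pr=1/L^2$, bound the dependency degree by $2LN$, and invoke the symmetric Lov\'asz Local Lemma. Your additional paragraph explaining why the finer indexing beats the naive per-edge events is not in the paper but is a helpful remark.
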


\begin{proof} 
For ease of exposition, we truncate each $L(e)$	so that it has exactly $L$ colors. Now, we assign to each edge $e$ a color chosen uniformly at random from $L(e)$. For each pair of incident edges $e_1$ and $e_2$, and color
$i \in L(e_1)\cap L(e_2)$, let $A_{i,e_1,e_2}$ be the event that both $e_1$ and $e_2$ are colored with $i$. Let $E$ be the set of all such events. We use the Lov\'asz Local Lemma to show that with positive probability none of the events in $E$ occur, i.e., $G$ has a $1$-edge-coloring.

Consider first the probability of $A_{i,e_1,e_2}$, clearly this is $1/L^2$. Consider next the dependency between events. $A_{i,e_1,e_2}$ depends only on the colors assigned to $e_1$ and $e_2$. letting $E_{e_1} = \{A_{j,e_1,f} | j \in L(e_1), f$ and $e_1$ are incident$\}$ and $E_{e_2} = \{A_{j,e_2,f} | j \in L(e_2), f$ and $e_2$ are incident$\}$, we see $A_{i,e_1,e_2}$ is mutually independent of $E-E_{e_1}-E_{e_2}$. Now, since $L(e_1)$ has exactly $L$ elements, and there are at most $N$ edges $f$ incident with $e_1$ of color $j$ for each $j \in L(e_1)$, we have $|E_{e_1}|\leq LN$. Similarly, $|E_{e_2}|\leq LN$. Thus each $A_{i,e_1,e_2}$ is mutually independent of a set of all but at most $2LN$ of the other events in $E$. Since $e\times (1/L^2 ) \times (2LN+1)\leq 1$, the Lov\'asz Local Lemma implies that a $1$-edge-coloring exists. This yields the desired result.	
\end{proof}

\paragraph{Reservations.}
We reserve a few colors, which will be used to finish the coloring using Lemma~\ref{lem:finisher}.
For each vertex $v$, we select a set of colors $\Res(v)$ from the lists of arcs incident to $v$.
The following lemma states that we can choose the sets $\Res(v)$ in a well-distributed way.
This lemma, though stated for graphs in the original source, has a proof in the original text that can be extended directly to digraphs. For completeness, we include its proof here.

\begin{lemma}[Reserve colors {\cite[Lemma 12]{MR00}},{\cite[Lemma 14.6]{MR13}}]\label{lem:reserved-colors}

Let $D$ be a digraph with sufficiently large maximum degree $\D$. Let $\mathcal{L}$ be an list assignments on arcs of $D$ such that 
$|\cL(e)| = \D+ 6\sqrt{\D} \log^4 \D$ for each arc $e$ of $D$.
Then for each vertex $v$, we can choose $\Res(v)$ such that for every arc $e=uv$ and colors $c_1 \in \Res(u), c_2 \in \Res(v)$, we have:
	\begin{enumerate}[\upshape (a)]
		\item $|\cL(e) \cap (\Res(u) \cup \Res(v))| \leq 3 \sqrt{\D} \log^4 \D$,
		\item $|\cL(e) \cap (\Res(u) \cap \Res(v))| \geq  \log^8 \D/2$, 
        \item $|\{w \in N^{+}(u) \colon c_1 \in \Res(w)\cap \mathcal{L}(uw)\}| \leq 2 \sqrt{\D} \log^4 \D$, \text{ and}
		\item $|\{w \in N^-(v) \colon c_2 \in \Res(w)\cap \mathcal{L}(wv) \}| \leq 2 \sqrt{\D} \log^4 \D$.
	\end{enumerate}
\end{lemma}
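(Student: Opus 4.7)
The plan is to construct $\Res(v)$ by an independent random procedure and verify (a)--(d) simultaneously using Chernoff bounds together with Lemma~\ref{lem:LLL}. For each vertex $v$ and each color $c \in \bigcup_{f \ni v} \cL(f)$, include $c$ in $\Res(v)$ independently with probability $p = \log^4 \D / \sqrt{\D}$, with all choices mutually independent across vertex--color pairs.

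With this choice the expectations line up neatly against the targets. For any arc $e = uv$, $|\cL(e) \cap (\Res(u) \cup \Res(v))|$ is a sum of Bernoullis of mean at most $2p\,|\cL(e)| = (2+o(1))\sqrt{\D}\log^4\D$, below the bound in (a); and $|\cL(e) \cap (\Res(u) \cap \Res(v))|$ has mean $p^2|\cL(e)| \ge p^2 \D = \log^8\D$, twice the bound in (b). For any vertex $u$ and color $c_1$, the variable $|\{w \in N^+(u) : c_1 \in \Res(w) \cap \cL(uw)\}|$ is a sum of independent Bernoullis (one per out-neighbor, independence holding because reservations at distinct vertices are independent) of mean at most $p\,d^+(u) \le \sqrt{\D}\log^4\D$, half the bound in (c); the analogous calculation handles (d). Declare one bad event for each instance of (a)--(d): $A_e, B_e$ for arcs $e$ and $C_{u,c_1}, D_{v,c_2}$ for vertex--color pairs. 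A standard Chernoff bound shows each bad event has probability at most $\exp(-\Omega(\log^8\D))$, the weakest bound coming from $B_e$ whose mean is only $\Theta(\log^8\D)$.

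Each event depends only on the reservations at $O(\D)$ vertex--color pairs (those at $u,v$ or in $N^+(u), N^-(v)$), and two events are independent unless their base arcs or base vertices lie within distance two in $D$, which bounds the LLL dependency degree by a fixed polynomial in $\D$ (a short case check gives $O(\D^2)$). Since $\exp(-\Omega(\log^8\D)) \cdot \D^{O(1)} = o(1)$ for $\D$ sufficiently large, Lemma~\ref{lem:LLL} yields a joint outcome satisfying all of (a)--(d) simultaneously, providing the required family $\{\Res(v)\}_{v \in V(D)}$. The main delicate step is enumerating the LLL dependency graph across the four event types; this requires no new idea beyond the undirected argument of Molloy and Reed~\cite{MR00,MR13}, with the directed setting contributing only the separate treatment of in- and out-neighborhoods in (c) and (d).
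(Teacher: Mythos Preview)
Your proposal is correct and follows essentially the same approach as the paper's proof: independent random reservation with probability $p=\log^4\D/\sqrt{\D}$, Chernoff bounds on the four bad-event types $A_e,B_e,C_{u,c_1},D_{v,c_2}$, and the Lov\'asz Local Lemma. The only differences are in bookkeeping---your dependency degree $O(\D^2)$ (via the fine-grained vertex--color analysis) is tighter than the paper's coarser $\D^5$, and your probability bound $\exp(-\Omega(\log^8\D))$ is sharper than the paper's $e^{-\log^2\D}$---but any polynomial dependency degree against any super-polynomially small probability suffices, so neither difference is material.
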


\begin{proof} 
For a vertex $v$, let $\cL(v)$ be the union of $\cL(f)$ over all arcs $f$ incident to $v$, so $|\cL(v)|\le 3\D^2$.
Then for each vertex $v$ and color $c\in \cL(v)$, we place $c$ into $\Res(v)$ with probability $p=log^4 \D/\sqrt{\D}$. For each arc $e=uv$ and colors $c_1\in \cL(u), c_2\in \cL(v)$, define $A_e,B_e,C_{u,c_1}$ and $D_{v,c_2}$ to be the event that $e$ violates condition (a),  $e$ violates condition (b), $u,c_1$ violate condition (c), and $v,c_2$ violate condition (d), respectively.

Note that 
$$\mathbb{E}(|\cL(e) \cap (\Res(u) \cup \Res(v))|) \leq |\cL(e)| \times 2p \approx 2\sqrt{\D} \log^4 \D,$$
$$\mathbb{E}(|\cL(e) \cap (\Res(u) \cap \Res(v))|) = |\cL(e)| \times p^2 \approx \log^8 \D,$$
$$\mathbb{E}(|\{w \in N^{+}(u) \colon c_1 \in \Res(w)\cap \mathcal{L}(uw)\}|) \leq \D \times p \approx \sqrt{\D} \log^4 \D,$$
$$\mathbb{E}(|\{w \in N^-(v) \colon c_2 \in \Res(w)\cap \mathcal{L}(wv) \}|) \leq \D \times p \approx \sqrt{\D} \log^4 \D.$$	

Thus, it is a straightforward application of the Chernoff Bound to show that the probability of any one event is (much) less than $e^{-log^2\D}$. Furthermore, a straightforward application of the Mutual Independence Principle verifies that each event $E$ is mutually independent of all those events which do not have a subscript at distance less than three from some subscript of $E$. 
Using the fact that for any vertex $v$, $C_{v,c}$ and $D_{v,c}$ is defined for fewer than $3 \D^2$ colors $c$, we see that each event is mutually independent of all but fewer than $\D^5$ other events. Therefore, our lemma follows from the Lov\'asz Local Lemma (Lemma~\ref{lem:LLL}).
\end{proof}

Given the sets $\{\Res(v)\}$, for each arc $e=uv$ we define
\begin{align}
	L_0(e) &= \mathcal{L}(e) \sm (\Res(u) \cup \Res(v)) \text{ and} \label{equ:def-Le} \\ 
	\Res(e)&= \mathcal{L}(e) \cap (\Res(u) \cap \Res(v)).\label{equ:def-Rservee}
\end{align} 

During the iterative coloring procedure, we color $D$ only from colors of $L_0(e)$.
In the final step, we color the remaining uncolored arcs with colors from $\Res(e)$ using Lemma~\ref{lem:finisher}. 
To apply Lemma~\ref{lem:finisher}, we need to monitor the relation between uncolored arcs and the reserved colors.
\begin{definition}\label{def:Rvc-tracking}
For a partial edge-coloring  $\gamma$ of $D$, let $R^{+}_{\gamma}(v,c)$ ($R^-_{\gamma}(v,c)$ respectively) be the set of uncolored arcs $vu$ ($uv$ respectively) with $c \in \Res(u)$.
\end{definition}

\paragraph{Parameters.}
Next, we define a number of parameters used to track the sizes of the lists of (still available) colors and other objects throughout the iterative coloring procedure.
Although the random coloring procedure is different, we use the same parameters as in \cite[Setup 13]{LP23}. Thus we can apply their lemmas directly in our analysis. 

\begin{setup}\label{Setup}
	Let $\p =1/4$, $\l= 2 \log \D$
    and define recursively:
	\begin{align*}
		L_0 &= \D+ {3 \sqrt{\D} \log^4 \D},	 & L_{i+1} &= L_i \cdot \K_i^2  - \sqrt{L_i} \log^2 \D,
		\\ N_0 &= \D, & N_{i+1} &= N_i \cdot  \K_i  \cdot \left(1- \p \R_i^2   \right) + \sqrt{N_i}  \log^2 \D,
		\\R_0 &= 2\sqrt{\D} \log^4 \D, & R_{i+1} &= R_i \cdot \left(1- \p \R_i^2   \right)   + \sqrt{R_i}\log^2 \D, 
		\\ 	\R_i &= \left(1-{\frac{\p}{L_i}}\right)^{N_i-1}, & \K_i &=  1- \p  \frac{N_i}{L_i} \cdot \R_i^2 \text{ and}
		\\ i_0 &= \min \{i \colon L_i < 3 \log^7 \D \}.
	\end{align*}
\end{setup}

At the beginning of the $(i+1)$-th iteration of the random coloring procedure, $L_i$ will be the list size of an arc, $N_i$ will be an upper bound on the size of color neighbors, and $R_i$ will be an upper bound on $|R^{+}_{\gamma}(v,c)|$ and $|R^-_{\gamma}(v,c)|$. From Lemma~\ref{lem:reserved-colors} we see this holds for $i=0$, and we will show the recurrence formula in Claim~\ref{cla:nibbler}.

For an arc $e=uv$, a vertex $v$ and a color $c$, the probability that no arc in $N^{+}(u,c)\cup N^{-}(v,c) \backslash \{e\}$ is assigned $c$, conditional on $e$ receiving $c$ is  $$P(e,c)= \left(1-  \frac{\p}{L_i} \right)^{| N^{+}(u,c)\backslash \{e\}|+| N^-(v,c)\backslash \{e\}|}.$$
Now we can give details of the probabilities used in our random coloring procedure.
Let
\begin{align}
	\Eq(e,c) &=  1- \frac{\R_i^2}{ P(e,c)},  \label{equ:Eq} \\
	\Vq^{+}(v,c) &= 1-\frac{\K_i}{1- \frac{\p}{L_i} |N^{+}(v,c)| \R_i^2} \text{ and } \label{equ:Vq^+} \\
	\Vq^-(v,c) &= 1-\frac{\K_i}{1- \frac{\p}{L_i} |N^-(v,c)| \R_i^2}. \label{equ:Vq^-}
\end{align}
We first give a sketch about what these parameters for. We will show that $Eq$ is used to make $\R_i^2$ be the probability that an arc retains an assigned color, and $Vq$ is used to make the probability that the list of an arc keeps a particular color be at least $\K_i^2$. 
Then we can show that after step~\ref{step:update-lists-and-coinflip1}, the expectation of the list size of an arc is at least $L_i \cdot \K_i^2$. For step~\ref{step:cycle-tweak1}, since the probability that an arc is activated and assigned a given color is $p/L_i$, the probability that a suspicious path with $k$ uncolored arcs becomes monochromatic is $(p/L_i)^k$. Then using the bounds on the number of suspicious paths in Observation~\ref{obs:number-suspicious}, we will show that step~\ref{step:cycle-tweak1} remove $O(1)$ colors from the list of an arc in expectation. Then we will show that the list size of an arc is highly concentrated around its expectation, so it is at least $L_i \cdot \K_i^2  - \sqrt{L_i} \log^2 \D$ after the $(i+1)$-th iteration, where $\sqrt{L_i} \log^2 \D$ is the concentration error term. So we can take $L_{i+1} = L_i \cdot \K_i^2  - \sqrt{L_i} \log^2 \D$. The analyses for $N_i$ and $R_i$ are similar.

The following lemma provides bounds for the parameters $L_i, N_i, R_i$.

\begin{lemma}[{Lang and Postle~\cite[Lemma 14]{LP23}}, after {Molloy and Reed~\cite[Lemma 7]{MR00}}]\label{lem:sizes}
	Suppose that $\D$ is sufficiency large and consider $R_{i}, L_{i}, N_{i},i_0$ as in Setup~\ref{Setup}.
	Then it follows that $L_{i_0}, N_{i_0}, R_{i_0}  > \log^7 \D$, $R_{i_0} \leq 3 \log^{7.5} \D$, $R_{i}/L_{i} \leq \log \D$, and $L_i > N_i > L_i/2$ for each $i \leq i_0$.
\end{lemma}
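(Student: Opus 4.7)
The plan is to establish all four claims by simultaneous induction on $i \in \{0,1,\dots,i_0\}$, maintaining three invariants: (A) $L_i/2 < N_i < L_i$, i.e.\ the ratio $x_i := N_i/L_i$ lies in $(1/2,1)$; (B) $R_i/L_i \le \log \D$; and (C) the three sequences stay large relative to the error scales $\sqrt{L_i}\log^2 \D$, $\sqrt{N_i}\log^2 \D$, $\sqrt{R_i}\log^2 \D$ so that the recurrences are well behaved. The base case $i=0$ is immediate from $L_0 = \D + 3\sqrt{\D}\log^4 \D$, $N_0 = \D$, and $R_0 = 2\sqrt{\D}\log^4 \D$: invariants (A) and (B) are satisfied with room to spare, and (C) holds because $\D$ is large.

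For the inductive step on (A), substituting $\K_i = 1 - p x_i \R_i^2$ into the recurrences yields, to leading order,
\[
\frac{N_{i+1}}{L_{i+1}} \;\approx\; x_i\cdot\frac{1-p\R_i^2}{1-p x_i \R_i^2},
\]
which is strictly less than $x_i$ whenever $x_i < 1$; hence $x_i$ is monotonically decreasing to leading order and stays below $1$. To keep $x_i$ above $1/2$ I would first observe that $\K_i$ is bounded away from $1$, so $L_i$ shrinks geometrically and $i_0 = O(\log \D)$; the cumulative perturbation to $x_i$ coming from the $\sqrt{L_i}\log^2 \D$ error terms is then a geometric sum of order $o(1)$, provided invariant (C) holds. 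Invariant (C) in turn persists at the final step because one iteration can shrink $L$ by at most the factor $\K_i^2 \ge (3/4)^2$ (using $p=1/4$), so $L_{i_0} \ge (3/4)^2 \cdot 3\log^7\D - O(\sqrt{L_{i_0-1}}\log^2\D) > \log^7\D$, exploiting that $L_{i_0-1} \ge 3\log^7\D$ by definition of $i_0$. Invariant (A) then gives $N_{i_0} > L_{i_0}/2 > \log^7 \D$.

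For (B), I would use the inequality $\K_i \ge 1 - p\R_i^2$ (a consequence of $x_i \le 1$), which gives $\K_i^2 \ge (1-p\R_i^2)^2$ and hence
\[
\frac{R_{i+1}}{L_{i+1}} \;\lesssim\; \frac{R_i/L_i}{1-p\R_i^2}.
\]
Telescoping this together with the approximate identity $L_i/L_0 \approx \prod_{j<i}\K_j^2$ bounds the total multiplicative growth of $R_i/L_i$ relative to its initial value $R_0/L_0 = O(\log^4\D/\sqrt{\D})$ by a poly-logarithmic factor, which suffices for (B). For the sharper endpoint bound $R_{i_0} \leq 3\log^{7.5}\D$, I would analyze the $R$-recurrence $R_{i+1} = R_i(1-p\R_i^2) + \sqrt{R_i}\log^2\D$ directly: since $\R_i$ is bounded away from $0$, the multiplicative factor $1-p\R_i^2$ is bounded below $1$ by a constant, so $R_i$ decays roughly geometrically over $O(\log\D)$ iterations, driving $R_0 = 2\sqrt{\D}\log^4\D$ down to the target scale; the lower bound $R_{i_0} > \log^7\D$ follows by checking that the additive $\sqrt{R_i}\log^2\D$ terms prevent full collapse.

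The main obstacle is the simultaneous control of all three sequences together with their small but nonnegligible additive corrections: $x_i$ must not drop too fast, $L_i, N_i, R_i$ must stay above $\log^7 \D$, and $R_i/L_i$ must remain bounded by $\log \D$, all while the $\sqrt{\cdot}\log^2\D$ errors accumulate across the $O(\log\D)$ iterations. The key saving is that invariant (C) forces the per-step correction to be small compared to the leading-order change, so the induction closes provided one tracks carefully how the constants compound. Since our Setup~\ref{Setup} uses precisely the same parameters as Lang and Postle's Setup 13, this calculation reduces verbatim to theirs.
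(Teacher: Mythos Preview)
The paper does not prove Lemma~\ref{lem:sizes} at all; it simply quotes the statement from Lang and Postle~\cite[Lemma~14]{LP23}, which applies verbatim because Setup~\ref{Setup} uses identical parameters. Your closing sentence already says exactly this, and that is the entirety of what the paper does.

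That said, your sketch of how such a proof would run contains two inaccuracies worth noting. First, the total multiplicative growth of $R_i/L_i$ is \emph{not} polylogarithmic. From $\K_j \ge 1-p\R_j^2$ one gets a per-step growth factor bounded by $1/\K_j$, and hence
\[
\prod_{j<i}\frac{1-p\R_j^2}{\K_j^2}\;\le\;\prod_{j<i}\frac{1}{\K_j}\;\approx\;\sqrt{L_0/L_i},
\]
which at $i=i_0$ is of order $\sqrt{\D}/\log^{3.5}\D$. Invariant~(B) still follows, but only because the initial ratio $R_0/L_0=O(\log^4\D/\sqrt{\D})$ absorbs this polynomial factor, yielding $R_i/L_i=O(\sqrt{\log\D})$. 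Second, the lower bound $R_{i_0}>\log^7\D$ does not come from the additive $\sqrt{R_i}\log^2\D$ terms ``preventing full collapse'': the stationary level at which $R_i\,p\R_i^2=\sqrt{R_i}\log^2\D$ is only $R_i=\Theta(\log^4\D)$, far below $\log^7\D$. The right mechanism is the same square-root relation: ignoring lower-order errors, $R_i/N_i \approx (R_0/N_0)\sqrt{L_0/L_i}$, whence $R_i \gtrsim \sqrt{L_i}\,\log^4\D$, and at $i=i_0$ this gives $R_{i_0}\gtrsim\log^{7.5}\D>\log^7\D$ (and simultaneously the upper bound $R_{i_0}\le 3\log^{7.5}\D$).
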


\paragraph{Iterations.}
As noted before, we will use $L_i,N_i,R_i$ to track objects throughout the iterative coloring procedure.
This will be stated in the next claim.
The following definitions help us with this statement.
A partial $(1,1)$-edge-coloring $\gamma$ of $D$ from $\cL(e)$ is called \emph{acyclic} if it does not induce any monochromatic directed cycles.
We say that lists $L(e) \subset \cL(e)$ are \emph{$\gamma$-compatible} if both of the following hold:

\begin{itemize}
	\item if an arc $uv$ is colored by $\gamma(uv)=c$, then $c \notin L(e)$ for all other arcs $e\in N^{+}(u)\cup N^-(v)$. 
	\item if an arc $e=uv$ is not colored by $\gamma$ and there exists a monochromatic directed path from $v$ to $u$ colored with $c$ under $\gamma$, then $c\not\in L(e)$.
\end{itemize}
We say another edge-coloring $\gamma'$ of $D$ \emph{extends} $\gamma$, if every arc colored by $\gamma$ is colored by $\gamma'$ in the same way.
Similarly, the lists $L(e)$ \emph{extend} lists $L'(e) \subset \cL(e)$, if $L(e) \subset L'(e)$ for all arcs $e$ of $D$.

\begin{claim}[Single coloring step]\label{cla:nibbler}
	Given Setup~\ref{Setup} and $\D$ large enough, the following holds for every $0 \leq i < i_0$.
	Suppose there is an acyclic $(1,1)$-edge-coloring $\gamma_i$ of $D$ and $\gamma_i$-compatible lists $L_i(e) \subset \cL(e)$ with the following properties:
	For every uncolored arc $e=uv$ and color $c \in L_i(e)$, we have
	\begin{enumerate}[\upshape (a)]
		\item \label{itm:list-size-before}  $|L_i(e)| = L_i$, 
		\item \label{itm:color-neighbors-size-before}  $|N^{+}_{D,L_i,\gamma_i}(u,c)| \leq N_i$, $|N^-_{D,L_i,\gamma_i}(v,c)| \leq N_i$, 
		\item \label{itm:reserved-size-before}  $|R^{+}_{\gamma_i}(u,c)| \leq R_i$, $|R^-_{\gamma_i}(v,c)| \leq R_i$, and
		\item \label{itm:cycle-conflict-before} there is no monochromatic directed path from $v$ to $u$ colored with $c$ under $\gamma_i$.
	\end{enumerate}
	
	Then it holds that: there exist an acyclic $(1,1)$-edge-coloring $\gamma_{i+1}$ of $D$ from $\cL(e)$ and $\gamma_{i+1}$-compatible lists $L_{i+1}(e)$ such that $L_{i+1}(e),\gamma_{i+1}$  {extend} $L_{i}(e),\gamma_{i}$.
	Moreover, for every uncolored arc $e=uv$ and color $c \in L_{i+1}(e)$, we have 
	\begin{enumerate}[\upshape (a$'$)]
		\item \label{itm:list-size-after} $|L_{i+1}(e)| = L_{i+1}$, 
		\item $|N^{+}_{D,L_{i+1},\gamma_{i+1}}(v,c)| \leq N_{i+1}$, $|N^-_{D,L_{i+1},\gamma_{i+1}}(v,c)| \leq N_{i+1}$,  
		\item \label{itm:reserved-size-after} $|R^{+}_{\gamma_{i+1}}(v,c)| \leq R_{i+1}$, $|R^-_{\gamma_{i+1}}(v,c)| \leq R_{i+1}$, and
		\item \label{itm:cycle-conflict-after}there is no monochromatic directed path from $v$ to $u$ colored with $c$ under $\gamma_{i+1}$.
	\end{enumerate}
\end{claim}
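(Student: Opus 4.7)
The plan is to execute one iteration of the digraph random coloring procedure from Section~\ref{sec:Modifying the random coloring procedure} starting from $\gamma_i$ and $L_i(e)$, and then show via the Lov\'asz Local Lemma that with positive probability the resulting coloring $\gamma_{i+1}$ and lists $L_{i+1}(e)$ satisfy (a$'$)--(d$'$). Property (d$'$) is built into step~\ref{step:cycle-tweak1} by construction: any color $c$ whose assignment would create a monochromatic directed path from $v$ to $u$ is removed from $L_{i+1}(uv)$ (and $uv$ is uncolored if needed), so $L_{i+1}(e)$ remains $\gamma_{i+1}$-compatible and $\gamma_{i+1}$ stays acyclic. Hence the task reduces to proving sharp concentration of the three size quantities in (a$'$)--(c$'$).

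First I would verify that $\Eq(e,c)$ and $\Vq^{\pm}(v,c)$ in \eqref{equ:Eq}--\eqref{equ:Vq^-} take values in $[0,1]$, using Lemma~\ref{lem:sizes} and the hypothesis~\ref{itm:color-neighbors-size-before}. These coin-flip probabilities are calibrated so that every arc assigned $c$ retains it with probability exactly $\R_i^2$, and every color $c$ remains in the list of a given uncolored $f \in N^{\pm}(v,c)$ with probability exactly $\K_i$ (this uniformization is precisely why $\Eq,\Vq^{\pm}$ were introduced). Linearity then yields
\begin{align*}
    \Exp\bigl[|L_{i+1}(e)|\bigr] &\geq L_i \K_i^2 - \Exp[X(e)],\\
    \Exp\bigl[|N^{\pm}_{D,L_{i+1},\gamma_{i+1}}(v,c)|\bigr] &\leq N_i \K_i \bigl(1 - \p \R_i^2\bigr),\\
    \Exp\bigl[|R^{\pm}_{\gamma_{i+1}}(v,c)|\bigr] &\leq R_i \bigl(1 - \p \R_i^2\bigr),
\end{align*}
where $X(e)$ counts colors deleted from $L(e)$ by step~\ref{step:cycle-tweak1}. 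A fixed path in $P(v,u,c)$ with $k$ uncolored arcs becomes monochromatic with probability $(\p/L_i)^k$, and Observation~\ref{obs:number-suspicious} gives $|P(v,u,c;k)| \leq N_i^{k-1}$ and $|P(u,c;\l)| \leq N_i^{\l}$. Since $N_i < L_i$ (Lemma~\ref{lem:sizes}) and $\l = 2\log\D$, summing over $c \in L_i(e)$ and $1 \leq k \leq \l$ shows $\Exp[X(e)] = O(1)$, which is comfortably dominated by the error term $\sqrt{L_i}\log^2 \D$ in Setup~\ref{Setup}.

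Next I would establish concentration via Talagrand's inequality, paralleling \cite[Lemmas 15--17]{LP23}. Each random variable of interest is determined by the independent random choices (activation, color assignment, and the $\Eq,\Vq^{\pm}$ coin flips) made by arcs within bounded combinatorial distance of the indexing arc or vertex--color pair. Changing any single such choice alters the value by $O(1)$, and certifying the value requires $O(L_i)$ (respectively $O(N_i)$, $O(R_i)$) such choices, yielding deviations of order $\sqrt{L_i}\log^2\D$, $\sqrt{N_i}\log^2\D$, $\sqrt{R_i}\log^2\D$ with failure probability at most $\exp(-\Omega(\log^3 \D))$. This matches the recurrences in Setup~\ref{Setup}, and after arbitrarily truncating each list so that $|L_{i+1}(e)| = L_{i+1}$ we obtain (a$'$)--(c$'$). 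The bad events are then combined by the Lov\'asz Local Lemma (Lemma~\ref{lem:LLL}): each event depends only on random choices in a neighborhood of polynomial size in $\D$, so its dependency degree is $\mathrm{poly}(\D)$, while the probability bound $\exp(-\Omega(\log^3 \D))$ makes the LLL condition trivially satisfied.

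The main obstacle will be the Talagrand analysis for $|L_{i+1}(e)|$, because step~\ref{step:cycle-tweak1} depends on colors assigned along suspicious directed paths of uncolored length up to $\l = 2\log\D$, so the naive Lipschitz constant grows with $\l$. Following \cite{LP23}, this is handled by splitting $|L_{i+1}(e)|$ into two contributions: the part coming from steps~\ref{step:unassign-and-coinflip1}--\ref{step:update-lists-and-coinflip1} has $O(1)$ Lipschitz constant and yields to Talagrand directly, while the part from step~\ref{step:cycle-tweak1} is already $O(1)$ in expectation by the path-counting bound above and can be absorbed via a Markov-type tail bound without degrading the overall concentration. The directed version of Observation~\ref{obs:number-suspicious} is crucial here: the bound $|P(v,u,c;k)| \leq N_i^{k-1}$ (rather than $N_i^k$) reflects the fact that in a $(1,1)$-edge-coloring there is at most one monochromatic extension from any given vertex, which replaces the twin-color trick used in the undirected case.
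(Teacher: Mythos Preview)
Your overall architecture---run one round of the procedure, obtain (d$'$) for free from step~\ref{step:cycle-tweak1}, bound expectations, concentrate via Talagrand, combine with the Lov\'asz Local Lemma, then truncate lists---is exactly the paper's proof, and the split of $|L_{i+1}(e)|$ into a steps~\ref{step:unassign-and-coinflip1}--\ref{step:update-lists-and-coinflip1} part and a step~\ref{step:cycle-tweak1} part is also correct. But the final paragraph contains two linked errors that break the argument.

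First, the LLL dependency degree is \emph{not} $\mathrm{poly}(\D)$. The variable $X(e)$ (and the analogous $Y^{\pm}(v,c),Z^{\pm}(v,c)$, which you also need since step~\ref{step:cycle-tweak1} can \emph{uncolor} arcs and thereby inflate $N^{\pm}$ and $R^{\pm}$) depends on activations and color assignments along every suspicious path of uncolored length up to $\ell=2\log\D$ reaching $e$. By Observation~\ref{obs:number-suspicious} there are up to $L_i N_i^{\ell}$ such paths, so two bad events can interact through a chain of this length and the dependency degree in Claim~\ref{cla:simultaneously} is $\D^{\Theta(\log\D)}$, not polynomial. Second, and consequently, a Markov-type tail bound on $X(e)$ is hopeless: it gives $\Pr(X(e)>t)\le O(1)/t$, which for $t=\tfrac12\sqrt{L_i}\log^2\D$ is at best polynomially small in $\D$, whereas the LLL now requires each bad event to have probability at most $\D^{-\Omega(\log\D)}$.

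The fix, which is what the paper actually does (Claim~\ref{cla:cycle-tweak}), is to apply Talagrand to $X(e)$ itself with $k=2$ and $r=\ell$. The point you have backwards is that the Lipschitz constant does \emph{not} grow with $\ell$: changing the activation or assigned color of a single arc $f$ can only add or remove the old and new colors of $f$ from $X(e)$, regardless of how many suspicious paths pass through $f$; hence $k=2$. What does grow with $\ell$ is the certificate size---to certify $c\in X(e)$ one exhibits the at most $\ell$ assignments along one monochromatic path---and Talagrand handles $r=\ell=2\log\D$ gracefully, yielding failure probability $\le\D^{-10\log\D}$, exactly what is needed against a $\D^{O(\log\D)}$ dependency.
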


Now we first finish the proof of Theorem~\ref{thm:main}. And the proof of Claim~\ref{cla:nibbler} will be in Section~\ref{sec:nibbler}.

\paragraph{Finishing the proof.}
We iteratively apply Claim~\ref{cla:nibbler} starting with $L_0(e)$, defined in~\eqref{equ:def-Le}, and the empty coloring of $D$, denoted by $\gamma_0$.
This yields a sequences of extensions $(L_i,\gamma_i)$ satisfying the outcome of Claim~\ref{cla:nibbler}.
In particular, each $\gamma_{i}$ is a $(1,1)$-edge-coloring, which does not contain any monochromatic directed cycles.
After $i_0$ steps, Lemma~\ref{lem:sizes} yields that for every arc $e$ and color $c \in \Res(e)$, defined in~\eqref{equ:def-Rservee}, there are at most $4R_{i_0}\leq 12\log^{7.5} \D$ arcs $f$ incident to $e$ for which $c \in \Res(f)$.
On the other hand $|\Res(e)| \geq  (\log^8 \D)/2$ by Lemma~\ref{lem:reserved-colors}.
So using Lemma~\ref{lem:finisher}, we can color all the remaining uncolored edges of the underlying graph of $D$ with a $1$-edge-coloring from the lists $\Res(e)$. 
Note that if an arc $f$ is incident to $e$, then $L_0(e)\cap \Res(f)=\emptyset$, 
so no new monochromatic directed cycles are generated.
This completes the proof of Theorem~\ref{thm:main}. \hfill\qed

\section{Proof of Claim~\ref{cla:nibbler}}\label{sec:nibbler}
In this section we use the random coloring procedure for digraphs in Section~\ref{sec:Modifying the random coloring procedure} to prove Claim~\ref{cla:nibbler}. The proof follows the argument for Claim~15 in \cite[Section 3]{LP23}. We will show that if we run the random coloring procedure with the partial $(1,1)$-edge-coloring $\gamma_i$ and the $\gamma_i$-compatible lists $L_i(e)$, then with positive probability, we will get our desired $\gamma_{i+1}$ and $L_{i+1}(e)$.

We first analyze step~\ref{step:activate1} to~\ref{step:update-lists-and-coinflip1} of the random coloring procedure.
For convenience, we denote $\gamma=\gamma_i$ and $L(e)=L_i(e)$.
We also denote the partial $(1,1)$-edge-coloring and lists of colors after step~\ref{step:update-lists-and-coinflip1} of the procedure by  $\gamma'$ and $L'(e)$.
And we also abbreviate $N^{+}(v,c)=N^{+}_{D,L,\gamma}(v,c)$, $N^-(v,c)=N^-_{D,L,\gamma}(v,c)$, $R^{+}(v,c)=R^{+}_{\gamma}(v,c)$, $R^-(v,c)=R^-_{\gamma}(v,c)$, $N'^{+}(v,c)=N^{+}_{D,L',\gamma'}(v,c)$, $N'^-(v,c)=N^-_{D,L',\gamma'}(v,c)$, $R'^{+}(v,c)=R^{+}_{\gamma'}(v,c)$ and $R'^-(v,c)=R^-_{\gamma'}(v,c)$.
A concentration analysis shows that the sizes of the random variables $L'(e)$, $N'^{+}(v,c)$, $N'^-(v,c)$, $R'^{+}(v,c)$ and $R'^-(v,c)$ are (individually) bounded with high probability.
Since our parameters in Setup~\ref{Setup} are the same as in \cite[Setup 13]{LP23}, the proof is similar. The details of the proof of Claim~\ref{cla:MR-edge-bounds} are spelled out in Appendix~\ref{sec:MR-edge-bounds}.

\begin{claim}[{Lang and Postle~\cite[Claim 16]{LP23}}]\label{cla:MR-edge-bounds}
	Fix an arc $e$, a vertex $v$ and a color $c$.
	It holds with probability at least $1-\D^{-10\log \D}$ that
	\begin{align*}
		|L'(e)| &\geq L_i \cdot \K_i^2 - \tfrac{1}{2} \sqrt{L_i} \log^2 \D,
		\\ |N'^{+}(v,c)| &\leq N_i \cdot  \K_i  \cdot \left(1- \p \R_i^2   \right) + \tfrac{1}{2}\sqrt{N_i} \log^2 \D ,
		\\ |N'^-(v,c)| &\leq N_i \cdot  \K_i  \cdot \left(1- \p \R_i^2   \right) + \tfrac{1}{2}\sqrt{N_i} \log^2 \D ,
		\\ |R'^{+}(v,c)| &\leq R_i \cdot  \left(1- \p \R_i^2   \right) + \tfrac{1}{2} \sqrt{R_i} \log^2 \D,  \text{ and}
		\\ |R'^-(v,c)| &\leq R_i \cdot  \left(1- \p \R_i^2   \right) + \tfrac{1}{2} \sqrt{R_i} \log^2 \D.
	\end{align*}
\end{claim}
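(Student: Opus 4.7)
The plan is to prove each of the five bounds by first computing the expectation of the corresponding random variable and matching it against the deterministic bound up to a slack of roughly $\tfrac{1}{4}\sqrt{L_i}\log^{2}\D$ (respectively $\tfrac{1}{4}\sqrt{N_i}\log^{2}\D$, $\tfrac{1}{4}\sqrt{R_i}\log^{2}\D$), and then applying Talagrand's concentration inequality, driving the failure probability of each below $\tfrac{1}{5}\D^{-10\log \D}$. A union bound over the five events then yields the claim. The reason the constants $\R_{i}^{2}$ and $\K_{i}$ appear so cleanly in the bounds is that the coin flips $\mathrm{Eq}(e,c)$, $\mathrm{Vq}^{+}(v,c)$, $\mathrm{Vq}^{-}(v,c)$ were defined precisely to make the following two equalities hold pointwise: the conditional probability that an activated arc $f$ assigned a color $c'$ retains $c'$ is exactly $\R_{i}^{2}$, and the conditional probability that $c$ survives the out-side (respectively in-side) list update at a vertex $v$ is exactly $\K_{i}$.

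For the expectation of $|L'(e)|$ with $e=uv$, linearity over $c\in L_i(e)$ and the two identities above give $\Pr[c\in L'(e)]\ge \K_{i}^{2}-\Pr[c\text{ removed in step~\ref{step:cycle-tweak1}}]$. To bound the last term I would use Observation~\ref{obs:number-suspicious}: a suspicious directed path with $k$ uncolored arcs becomes monochromatic with $c$ with probability at most $(\p/L_i)^{k}$, and there are at most $N_{i}^{k-1}$ such paths of uncolored length $k<\ell$ plus $N_{i}^{\ell}$ paths from $P(u,c;\ell)$. Summing the geometric series using $N_i/L_i<1$ and $p<1$ gives an $O(1)$ contribution per arc, far smaller than $\tfrac14\sqrt{L_i}\log^{2}\D$. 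The expectations of $|N'^{\pm}(v,c)|$ and $|R'^{\pm}(v,c)|$ are even simpler: an arc $f=vw\in N^{+}(v,c)$ contributes to $|N'^{+}(v,c)|$ iff $f$ is not colored and $c$ survives in $L'(f)$, which has probability at most $(1-\p\R_i^{2})\K_i$; an arc $f\in R^{+}(v,c)$ remains iff it is not colored, with probability $1-\p\R_i^{2}$.

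For the concentration step I would verify the two hypotheses of Talagrand's inequality for each of the five random variables, viewed as functions of the independent random trials (arc activations, color assignments, $\mathrm{Eq}$-flips, $\mathrm{Vq}$-flips). Each variable is coordinate-Lipschitz with constant $O(\ell)=O(\log \D)$, since changing the outcome of one trial can only affect retentions on a bounded neighborhood plus a bounded number of suspicious paths of length $\le\ell$ containing the affected arc; and each variable is certifiable by exhibiting the retained arcs and the monochromatic suspicious paths that forced removals, of total count at most $O(\text{value})$. Talagrand's inequality then gives a tail probability of order $\exp(-\Omega(\log^{4}\D/\log^{2}\D))=\exp(-\Omega(\log^{2}\D))\ll \D^{-10\log \D}$; the extra $\log^{2}\D$ factor baked into the deviation term is exactly what buys us this margin.

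The main obstacle is the Lipschitz/certifiability verification for $|L'(e)|$, because step~\ref{step:cycle-tweak1} couples $L'(e)$ to paths of uncolored length up to $\ell=2\log \D$: flipping a single color assignment can turn a suspicious path into a monochromatic one or vice versa, and each arc lies in many such paths. Controlling this contribution requires using Observation~\ref{obs:number-suspicious} once more to bound how many paths can be affected by a single trial, which produces the $O(\ell)$ Lipschitz constant rather than $O(1)$; all other couplings (conflict resolution, list updates, reserved-color tracking) are local in the $1$-neighborhood of the arc or vertex in question and are routine. Since the parameters in Setup~\ref{Setup} coincide with those of Lang and Postle, once these verifications are set up the numerical estimates reduce to their analysis for the undirected case, applied separately to the out-side and in-side of each vertex.
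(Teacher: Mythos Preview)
There is a definitional slip: in this claim $L'(e)$, $N'^{\pm}(v,c)$, $R'^{\pm}(v,c)$ denote the objects \emph{after step~\ref{step:update-lists-and-coinflip1}}, not after step~\ref{step:cycle-tweak1}; the effect of the cycle-prevention step is bounded separately in Claim~\ref{cla:cycle-tweak} via $X(e),Y^{\pm},Z^{\pm}$. So your discussion of suspicious paths and the resulting $O(\ell)$ Lipschitz constant does not belong here.

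More importantly, even for steps~\ref{step:activate1}--\ref{step:update-lists-and-coinflip1} alone your concentration plan has genuine gaps. The variable $|N'^{+}(v,c)|$ is \emph{not} coordinate-Lipschitz with any small constant: if a single arc $vw\in N^{+}(v,c)$ retains $c$, then step~\ref{step:update-lists-and-coinflip1} deletes $c$ from the list of every other arc in $N^{+}(v,c)$, so $|N'^{+}(v,c)|$ drops to~$0$; flipping one trial can therefore change the variable by $\Theta(N_i)$. The paper fixes this by passing to the proxy $N^{*+}(v,c)\supseteq N'^{+}(v,c)$, which ignores retention of $c$ on the $v$-side and does have bounded differences. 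For $|L'(e)|$ there is a dual problem: you cannot certify $|L'(e)|\ge s$ with $O(s)$ witnesses, since for each surviving color one must certify that it was \emph{not} retained anywhere in $N^{+}(u,c)\cup N^{-}(v,c)$; the paper instead writes $|L_i(e)|-|L'(e)|$ as a signed combination of the directly certifiable counts $X_{m,n}$ and applies Talagrand to each of those. Finally, $\Pr[c\in L'(e)]\ge\K_i^{2}$ does not fall out of your ``two identities'': the out-side survival event at $u$ and the in-side survival event at $v$ are correlated (they share $e$ and may share further arcs), and the paper runs an inclusion-exclusion argument to recover the product lower bound.
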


Now we analyze step~\ref{step:cycle-tweak1}.
Consider an arc $e=uv$ and a color $c$ such that there is a monochromatic directed path $P$ from $v$ to $u$ colored with $c$ under $\gamma'$.
By condition~\ref{itm:cycle-conflict-before} of Claim~\ref{cla:nibbler}, $P$ must have an uncolored arc under $\gamma$.
Hence we can focus on monochromatic directed paths that correspond to $P(v,u,c)$, as defined in (\ref{equ:monochromatic-paths}).
We denote the coloring after step~\ref{step:cycle-tweak1} by $\gamma''$ and the lists of colors by $L''$.
Clearly $(\gamma'',L'')$ is an extension of $(\gamma,L)$.
Moreover, $\gamma''$ is still a $(1,1)$-edge-coloring and also satisfies~\ref{itm:cycle-conflict-after} of Claim~\ref{cla:nibbler}.
It remains to show that $\gamma''$ satisfies~\ref{itm:list-size-after}--\ref{itm:reserved-size-after}, so we need to analyze the impact of step~\ref{step:cycle-tweak1} on the random variables $L',N',R'$.
The following random variables help us to track these deviations.

\begin{definition}
	Consider an (in $\gamma$) uncolored arc $uv$ and a color $c$.
	\begin{itemize}
		\item Let $X(uv)$ be the set of colors $c\in L(uv)$ for which there is a directed path in $P(v,u,c)$ that becomes monochromatic with $c$  after step~\ref{step:assign1}.
		\item Let $Y^{+}(v,c)$ ($Y^- (v,c)$ respectively) be the set of arcs $vw \in N^{+}(v,c)$ ($wv \in N^-(v,c)$ respectively) for which there is a directed path in $P(w,v,c)$ ($P(v,w,c)$ respectively) that becomes monochromatic with $c$  after step~\ref{step:assign1}.
		\item Let $Z^{+}(v,c)$ ($Z^- (v,c)$ respectively) be the set of arcs $vw$ ($wv$ respectively) with $c \in \Res(w)$, for which there is a directed path in $P(w,v,c)$ ($P(v,w,c)$ respectively) that becomes monochromatic  with $c$ after step~\ref{step:assign1}.
	\end{itemize}
\end{definition}

Denote by  $N''^{+}(v,c)$, $N''^-(v,c)$, $R''^{+}(v,c)$, $R''^-(v,c)$ the objects obtained from $N'^{+}(v,c)$, $N'^-(v,c)$, $R'^{+}(v,c)$, $R'^-(v,c)$ after step~\ref{step:cycle-tweak1}.

Note that
\begin{align*}
	|L''(e)| &\geq |L'(e)| - |X(e)|,
	\\	|N''^{+}(v,c)| &\leq |N'^{+}(v,c)| + |Y^{+}(v,c)|, 
	\\	|N''^-(v,c)| &\leq |N'^-(v,c)| + |Y^-(v,c)|, 
	\\|	R''^{+}(v,c) |&\leq |R'^{+}(v,c)| + |Z^{+}(v,c)|,   \text{ and }
	\\|	R''^-(v,c) |&\leq |R'^-(v,c)| + |Z^-(v,c)|.
\end{align*}
The next claim shows that the sizes of $X(e)$, $Y^{+}(v,c)$, $Y^-(v,c)$, $Z^{+}(v,c)$ and $Z^-(v,c)$ are bounded with high probability.
\begin{claim}[{Lang and Postle~\cite[Claim 18]{LP23}}]\label{cla:cycle-tweak}
	Fix an arc $e$, a vertex $v$ and a color $c$.
	It holds with probability at least $1-\D^{-10\log \D}$ that 
	\begin{align*}
		|X(e)| &\leq  \tfrac{1}{2} \sqrt{L_i} \log^2 \D,
		\\ |Y^{+}(v,c)| &\leq \tfrac{1}{2} \sqrt{N_i} \log^2 \D,
		\\ |Y^-(v,c)| &\leq \tfrac{1}{2} \sqrt{N_i} \log^2 \D,
		\\ |Z^{+}(v,c)| &\leq \tfrac{1}{2} \sqrt{R_i} \log^2 \D,  \text{ and}
		\\ |Z^-(v,c)| &\leq \tfrac{1}{2} \sqrt{R_i} \log^2 \D.
	\end{align*}
\end{claim}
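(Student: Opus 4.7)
The plan is to follow the strategy of Lang and Postle's proof of their Claim 18, adapted to the directed setting: first bound the expectations of the five quantities, then apply concentration.

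For the expectations, a suspicious directed path of uncolored length $k$ becomes monochromatic for color $c$ after step~\ref{step:assign1} exactly when each of its $k$ uncolored arcs is activated and assigned $c$, an event of probability $(\p/L_i)^k$. A union bound together with Observation~\ref{obs:number-suspicious} then gives
\[
\Pr[c \in X(e)] \le \sum_{k=1}^{\l-1} N_i^{k-1}\left(\frac{\p}{L_i}\right)^k + N_i^{\l}\left(\frac{\p}{L_i}\right)^{\l} = O(1/L_i),
\]
since $\p N_i/L_i \le 1/4$ makes this a convergent geometric series whose tail $(1/4)^{2\log \D}$ is negligible. Summing over the $L_i$ colors yields $\Exp[|X(e)|] = O(1)$, and analogous calculations, summing instead over at most $N_i$ arcs in $N^{\pm}(v,c)$ or $R_i$ arcs contributing to $R^{\pm}(v,c)$, give $\Exp[|Y^{\pm}(v,c)|] = O(1)$ and $\Exp[|Z^{\pm}(v,c)|] = O(R_i/L_i) = O(\log \D)$ via Lemma~\ref{lem:sizes}.

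For concentration, the target bounds $\tfrac{1}{2}\sqrt{L_i}\log^2 \D$, $\tfrac{1}{2}\sqrt{N_i}\log^2 \D$, and $\tfrac{1}{2}\sqrt{R_i}\log^2 \D$ vastly exceed these expectations, so only a weak deviation estimate is needed. For $|X(e)|$ the argument is especially clean: if $t$ distinct colors lie in $X(e)$ then $t$ suspicious paths (one per color) must all become monochromatic, and since no uncolored arc can be assigned two different colors, these paths must be arc-disjoint in their uncolored portions, so the events are independent and factor as
\[
\Pr[|X(e)| \ge t] \le \binom{L_i}{t} \left(O(1/L_i)\right)^t \le \left(\frac{O(1)}{t}\right)^t,
\]
which for $t = \tfrac{1}{2}\sqrt{L_i}\log^2 \D$, using $L_i > \log^7 \D$ from Lemma~\ref{lem:sizes}, is far below $\D^{-10 \log \D}$. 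For $Y^{\pm}$ and $Z^{\pm}$ all paths involve the same color $c$ and so may share uncolored arcs without conflict, and this simple product bound no longer suffices; instead we apply Talagrand's inequality to the count of monochromatic suspicious paths, which is $1$-Lipschitz and $O(\l)$-certifiable in the color-assignment randomness, giving concentration around the small expectation well within the desired tail $\D^{-10 \log \D}$.

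The main technical obstacle is precisely this concentration step for $Y^{\pm}$ and $Z^{\pm}$: verifying Talagrand's hypotheses in the directed setting requires handling the two heterogeneous families $P(w,v,c;k)$ for $k < \l$ and $P(v,c;\l)$ uniformly, and arguing that changing one arc's color assignment perturbs the count by a bounded amount even though the arc may lie in many suspicious paths simultaneously; apart from this point, the directed argument closely mirrors the undirected analysis of \cite{LP23}.
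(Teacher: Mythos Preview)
Your expectation bounds are correct and essentially identical to the paper's. For $|X(e)|$ your disjointness/product argument is a genuinely different route from the paper: the paper applies Talagrand's inequality to $|X(e)|$ with Lipschitz constant $k=2$ and certificate size $r=\ell$, whereas you exploit the fact that witnesses for distinct colors must be arc-disjoint on their uncolored portions to get a direct tail bound $\Pr[|X(e)|\ge t]\le\binom{L_i}{t}(O(1)/L_i)^t$. Both work, and yours is pleasantly elementary.

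The gap is in your treatment of $Y^{\pm}$ and $Z^{\pm}$. You propose applying Talagrand to the \emph{count of monochromatic suspicious paths}, claiming this count is $1$-Lipschitz and bounds $|Y^{\pm}(v,c)|$. Neither claim holds. First, the path count does not dominate $|Y^{+}(v,c)|$: a single monochromatic path in $P(v,c;\ell)$ lies in $P(w,v,c)$ for \emph{every} $vw\in N^{+}(v,c)$, so one such path puts all of $N^{+}(v,c)$ into $Y^{+}(v,c)$. Second, the count is not $1$-Lipschitz: a single uncolored arc $f$ may lie on many suspicious paths for $c$ (for instance, if all $w_i\to b\to v$ are suspicious then changing the color of $bv$ destroys all of them at once), so changing one trial can alter the count by an unbounded amount. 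You rightly flag ``arguing that changing one arc's color assignment perturbs the count by a bounded amount'' as the crux, but this statement is simply false, not merely delicate; the obstacle you identify is fatal to the specific plan you outline.

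The paper does not pass through a path count. It applies Talagrand directly to each of $|X(e)|,|Y^{\pm}(v,c)|,|Z^{\pm}(v,c)|$ with parameters $k=2$, $r=\ell$, $t=\tfrac14\sqrt{\cdot}\log^2\Delta$. For $|X(e)|$ the bound $k=2$ is justified by the observation that changing one activation or one colour assignment can affect at most two colours (the old and new one), hence at most two elements of the colour set $X(e)$; the paper then asserts that the concentration of $|Y^{\pm}|$ and $|Z^{\pm}|$ ``follows along the same lines'' without spelling out the Lipschitz verification for those variables.
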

Using the Lov\'asz Local Lemma, our final claim shows that the above bounds on random variables can hold simultaneously. The proofs of Claims~\ref{cla:cycle-tweak} and \ref{cla:simultaneously} are almost the same as the cycle modification analysis in \cite{LP23}. There are only a few differences due to the extension to digraphs. The details of the proof are spelled out in Appendix~\ref{sec:cycle}.

\begin{claim}[{Lang and Postle~\cite[Claim 19]{LP23}}]\label{cla:simultaneously}
	With positive probability, the conclusions of Claim~\ref{cla:MR-edge-bounds} and~\ref{cla:cycle-tweak} hold for all arcs $e$, vertices $v$ and colors $c$.
\end{claim}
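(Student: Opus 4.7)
The plan is a direct application of the Lov\'asz Local Lemma (Lemma~\ref{lem:LLL}) to combine the individual high-probability bounds from Claims~\ref{cla:MR-edge-bounds} and~\ref{cla:cycle-tweak} into a simultaneous statement. For every arc $e$, vertex $v$, and color $c$, I introduce a family of bad events corresponding to the failure of each of the ten inequalities whose intersection gives $|L''(e)| \ge L_{i+1}$, $|N''^{\pm}(v,c)| \le N_{i+1}$, and $|R''^{\pm}(v,c)| \le R_{i+1}$ (using the stated inequalities relating $L'', N''^\pm, R''^\pm$ to $L',N'^\pm,R'^\pm$ and to $X, Y^\pm, Z^\pm$). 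By Claims~\ref{cla:MR-edge-bounds} and~\ref{cla:cycle-tweak}, each such bad event has probability at most $\D^{-10\log\D}$.

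Next I would bound the dependency degree $d$. Every bad event is determined by the random choices made in steps~\ref{step:activate1}--\ref{step:update-lists-and-coinflip1} (activation, assigned color, and the two coin flips) on an explicit collection of arcs. For the events from Claim~\ref{cla:MR-edge-bounds}, this collection consists of arcs at graph-distance at most two from $e$ or $v$, contributing only $O(\D^2)$ arcs. The events involving $X, Y^\pm, Z^\pm$ require more care: by construction and by Observation~\ref{obs:number-suspicious} together with the bound $N_i \le \D$, they depend on the random choices on every arc of every suspicious directed path of uncolored length at most $\ell = 2\log \D$ through $e$ or $v$, of which there are at most $\D^{O(\log\D)}$. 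A symmetric count bounds the number of bad events whose dependency set contains any fixed arc, so the total dependency degree is $d = \D^{O(\log\D)}$.

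Finally, since $e \cdot \D^{-10\log\D} \cdot (d+1) = \D^{-\Omega(\log\D)} = o(1)$ for $\D$ sufficiently large, the Lov\'asz Local Lemma produces an outcome of the random procedure in which \emph{every} bad event is avoided, which is precisely the statement of the claim. The main obstacle is the mutual-independence bookkeeping for the cycle-related events $X, Y^\pm, Z^\pm$: one must verify that these events really depend only on the random variables attached to the arcs of the relevant suspicious paths, a claim whose verification must track that step~\ref{step:cycle-tweak1} is deterministic given the outcomes of steps~\ref{step:activate1}--\ref{step:update-lists-and-coinflip1}. Once this is established, the super-polynomial decay $\D^{-10\log\D}$ coming from Claims~\ref{cla:MR-edge-bounds} and~\ref{cla:cycle-tweak} is precisely calibrated to absorb the quasi-polynomial dependency count, just as in the corresponding step of Lang and Postle's undirected argument.
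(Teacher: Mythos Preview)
Your plan is correct and matches the paper's approach: define bad events for each failed inequality, bound their probabilities by $\D^{-10\log\D}$ via Claims~\ref{cla:MR-edge-bounds} and~\ref{cla:cycle-tweak}, bound the dependency degree by counting suspicious paths of uncolored length $\le \ell$ (the paper uses Observation~\ref{obs:suspicous-path-for-c} for this), and apply the Lov\'asz Local Lemma. The one place to tighten is your asymptotic bookkeeping: writing $d=\D^{O(\log\D)}$ and then asserting $e\cdot\D^{-10\log\D}(d+1)=o(1)$ is only valid once you pin down the implicit constant below $10$; the paper does this explicitly, obtaining $d \le 10(8L_i^2 N_i^{\ell+1} R_i^2)^2 \le \D^{8\log\D}$, which is what makes the exponent $10$ in the failure probability ``precisely calibrated'' as you say.
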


Now we finish the proof of Claim~\ref{cla:nibbler}.

By Claim~\ref{cla:simultaneously}, we can choose the color assignments such that
\begin{align*}
	|L''(e)| &\geq \left( L_i \cdot \K_i^2 - \tfrac{1}{2} \sqrt{L_i} \log^2 \D\right) - \left(\tfrac{1}{2} \sqrt{L_i} \log^2 \D\right) = L_{i+1},
	\\	|N''^{+}(v,c)| &\leq   \left(N_i \cdot  \K_i  \cdot \left(1- \p \R_i^2   \right) + \tfrac{1}{2}\sqrt{N_i} \log^2 \D \right)+ \left(\tfrac{1}{2} \sqrt{N_i} \log^2 \D\right) = N_{i+1}, 
	\\	|N''^-(v,c)| &\leq   \left(N_i \cdot  \K_i  \cdot \left(1- \p \R_i^2   \right) + \tfrac{1}{2}\sqrt{N_i} \log^2 \D \right)+ \left(\tfrac{1}{2} \sqrt{N_i} \log^2 \D\right) = N_{i+1}, 
	\\|R''^{+}(v,c)| & \leq \left(R_i \cdot  \left(1- \p \R_i^2   \right) + \tfrac{1}{2} \sqrt{R_i} \log^2 \D \right) + \left(\tfrac{1}{2} \sqrt{R_i} \log^2 \D\right) = R_{i+1}, \text{ and }
	\\|R''^-(v,c)| & \leq \left(R_i \cdot  \left(1- \p \R_i^2   \right) + \tfrac{1}{2} \sqrt{R_i} \log^2 \D \right) + \left(\tfrac{1}{2} \sqrt{R_i} \log^2 \D\right) = R_{i+1}.
\end{align*}
Take $L_{i+1}(e)=L''(e)$ and $\gamma_{i+1} = \gamma''$.
Note that $\gamma_{i+1}$ is acyclic and the lists $L_{i+1}(e)$ are $\gamma_{i+1}$-compatible due to steps~\ref{step:update-lists-and-coinflip1} and~\ref{step:cycle-tweak1} of the procedure. Note that  $|L_{i+1}(e)|\ge L_{i+1}$, so to ensure that $|L_{i+1}(e)|=L_{i+1}$, we simply delete $|L_{i+1}(e)|-L_{i+1}$ colors from every list $L_{i+1}(e)$.
This completes the proof of Claim~\ref{cla:nibbler}. \hfill\qed

\printbibliography

@article {Alo88,
	AUTHOR = {Alon, Noga},
	TITLE = {The linear arboricity of graphs},
	JOURNAL = {Israel J. Math.},
	FJOURNAL = {Israel Journal of Mathematics},
	VOLUME = {62},
	YEAR = {1988},
	NUMBER = {3},
	PAGES = {311--325},
	ISSN = {0021-2172},
	MRCLASS = {05C70},
	MRNUMBER = {955135},
	MRREVIEWER = {Pavel\ Tomasta},
	DOI = {10.1007/BF02783300},
	URL = {https://doi.org/10.1007/BF02783300},
}

@book {AS16,
	AUTHOR = {Alon, Noga and Spencer, Joel H.},
	TITLE = {The probabilistic method},
	SERIES = {Wiley Series in Discrete Mathematics and Optimization},
	EDITION = {Fourth},
	PUBLISHER = {John Wiley \& Sons, Inc., Hoboken, NJ},
	YEAR = {2016},
	PAGES = {xiv+375},
	ISBN = {978-1-119-06195-3},
	MRCLASS = {60-02 (05C80 05D40 60C05 60F10 60G42)},
	MRNUMBER = {3524748},
}

@misc{AW99,
	Author = {An, Xinhui and Wu, Baoyindureng},
	Note = {Submitted},
	Title = {List linear arboricity of series-parallel graphs},
}

@article {AEH80,
	AUTHOR = {Akiyama, Jin and Exoo, Geoffrey and Harary, Frank},
	TITLE = {Covering and packing in graphs. {III}. {C}yclic and acyclic
		invariants},
	JOURNAL = {Math. Slovaca},
	FJOURNAL = {Mathematica Slovaca},
	VOLUME = {30},
	YEAR = {1980},
	NUMBER = {4},
	PAGES = {405--417},
	ISSN = {0025-5173},
	MRCLASS = {05C70},
	MRNUMBER = {595302},
	MRREVIEWER = {R.\ C.\ Mullin},
}

@article {CHY24,
	AUTHOR = {Chen, Guantao and Hao, Yanli and Yu, Guoning},
	TITLE = {A decomposition method on solving the linear arboricity
		conjecture},
	JOURNAL = {J. Graph Theory},
	FJOURNAL = {Journal of Graph Theory},
	VOLUME = {105},
	YEAR = {2024},
	NUMBER = {2},
	PAGES = {315--332},
	ISSN = {0364-9024,1097-0118},
	MRCLASS = {05C70 (05C05)},
	MRNUMBER = {4684241},
	MRREVIEWER = {V.\ Vilfred},
	DOI = {10.1002/jgt.23040},
	URL = {https://doi.org/10.1002/jgt.23040},
}

@incollection {EL75,
	AUTHOR = {Erd\H os, Paul},
	TITLE = {Problems and results on finite and infinite graphs},
	BOOKTITLE = {Recent advances in graph theory ({P}roc. {S}econd
		{C}zechoslovak {S}ympos., {P}rague, 1974)},
	PAGES = {183--192. (loose errata)},
	PUBLISHER = {Academia, Prague},
	YEAR = {1975},
	MRCLASS = {05C99},
	MRNUMBER = {389669},
	MRREVIEWER = {F.\ Harary},
}

@article {FFJ20,
	AUTHOR = {Ferber, Asaf and Fox, Jacob and Jain, Vishesh},
	TITLE = {Towards the linear arboricity conjecture},
	JOURNAL = {J. Combin. Theory Ser. B},
	FJOURNAL = {Journal of Combinatorial Theory. Series B},
	VOLUME = {142},
	YEAR = {2020},
	PAGES = {56--79},
	ISSN = {0095-8956,1096-0902},
	MRCLASS = {05C70 (05C48 68R10)},
	MRNUMBER = {4074176},
	MRREVIEWER = {Huijuan\ Wang},
	DOI = {10.1016/j.jctb.2019.08.009},
	URL = {https://doi.org/10.1016/j.jctb.2019.08.009},
}

@article {H70,
	AUTHOR = {Harary, Frank},
	TITLE = {Covering and packing in graphs. {I}},
	JOURNAL = {Ann. New York Acad. Sci.},
	FJOURNAL = {Annals of the New York Academy of Sciences},
	VOLUME = {175},
	YEAR = {1970},
	PAGES = {198--205},
	ISSN = {0077-8923,1749-6632},
	MRCLASS = {05.40},
	MRNUMBER = {263677},
}

@article {HLBS17,
	AUTHOR = {He, Wei Hua and Li, Hao and Bai, Yan Dong and Sun, Qiang},
	TITLE = {Linear arboricity of regular digraphs},
	JOURNAL = {Acta Math. Sin. (Engl. Ser.)},
	FJOURNAL = {Acta Mathematica Sinica (English Series)},
	VOLUME = {33},
	YEAR = {2017},
	NUMBER = {4},
	PAGES = {501--508},
	ISSN = {1439-8516,1439-7617},
	MRCLASS = {05C20 (05C70 05C80)},
	MRNUMBER = {3620188},
	MRREVIEWER = {David\ Burns},
	DOI = {10.1007/s10114-016-5071-9},
	URL = {https://doi.org/10.1007/s10114-016-5071-9},
}

@article {KP17,
	AUTHOR = {Kim, Ringi and Postle, Luke},
	TITLE = {The list linear arboricity of graphs},
	JOURNAL = {J. Graph Theory},
	FJOURNAL = {Journal of Graph Theory},
	VOLUME = {98},
	YEAR = {2021},
	NUMBER = {1},
	PAGES = {125--140},
	ISSN = {0364-9024,1097-0118},
	MRCLASS = {05C15 (05C05)},
	MRNUMBER = {4313231},
	MRREVIEWER = {Daphne\ Der-Fen\ Liu},
	DOI = {10.1002/jgt.22685},
	URL = {https://doi.org/10.1002/jgt.22685},
}

@article {LP23,
	AUTHOR = {Lang, Richard and Postle, Luke},
	TITLE = {An improved bound for the linear arboricity conjecture},
	JOURNAL = {Combinatorica},
	FJOURNAL = {Combinatorica. An International Journal on Combinatorics and
		the Theory of Computing},
	VOLUME = {43},
	YEAR = {2023},
	NUMBER = {3},
	PAGES = {547--569},
	ISSN = {0209-9683,1439-6912},
	MRCLASS = {05C15 (05C70)},
	MRNUMBER = {4630467},
	MRREVIEWER = {Anna\ O.\ Ivanova},
	DOI = {10.1007/s00493-023-00024-9},
	URL = {https://doi.org/10.1007/s00493-023-00024-9},
}

@inproceedings {MR00,
	AUTHOR = {Molloy, Michael and Reed, Bruce},
	TITLE = {Near-optimal list colorings},
	BOOKTITLE = {Proceedings of the {N}inth {I}nternational {C}onference
		``{R}andom {S}tructures and {A}lgorithms'' ({P}oznan, 1999)},
	JOURNAL = {Random Structures Algorithms},
	FJOURNAL = {Random Structures \& Algorithms},
	VOLUME = {17},
	YEAR = {2000},
	NUMBER = {3-4},
	PAGES = {376--402},
	ISSN = {1042-9832,1098-2418},
	MRCLASS = {05C15 (05C85 68R10)},
	MRNUMBER = {1801140},
	MRREVIEWER = {Adrian\ Riskin},
	DOI = {10.1002/1098-2418(200010/12)17:3/4<376::AID-RSA10>3.0.CO;2-0},
	URL =
	{https://doi.org/10.1002/1098-2418(200010/12)17:3/4<376::AID-RSA10>3.0.CO;2-0},
}

@book {MR13,
	AUTHOR = {Molloy, Michael and Reed, Bruce},
	TITLE = {Graph colouring and the probabilistic method},
	SERIES = {Algorithms and Combinatorics},
	VOLUME = {23},
	PUBLISHER = {Springer-Verlag, Berlin},
	YEAR = {2002},
	PAGES = {xiv+326},
	ISBN = {3-540-42139-4},
	MRCLASS = {05-02 (05C15 05C80 60-02 60C05)},
	MRNUMBER = {1869439},
	MRREVIEWER = {P.\ Mark\ Kayll},
	DOI = {10.1007/978-3-642-04016-0},
	URL = {https://doi.org/10.1007/978-3-642-04016-0},
}

@article {NP87,
	AUTHOR = {Nakayama, A. and P\'eroche, B.},
	TITLE = {Linear arboricity of digraphs},
	JOURNAL = {Networks},
	FJOURNAL = {Networks. An International Journal},
	VOLUME = {17},
	YEAR = {1987},
	NUMBER = {1},
	PAGES = {39--53},
	ISSN = {0028-3045,1097-0037},
	MRCLASS = {05C20 (05C38 68P20)},
	MRNUMBER = {874287},
	MRREVIEWER = {Guy\ Chaty},
	DOI = {10.1002/net.3230170104},
	URL = {https://doi.org/10.1002/net.3230170104},
}

@article {Tal95,
	AUTHOR = {Talagrand, Michel},
	TITLE = {Concentration of measure and isoperimetric inequalities in
		product spaces},
	JOURNAL = {Inst. Hautes \'Etudes Sci. Publ. Math.},
	FJOURNAL = {Institut des Hautes \'Etudes Scientifiques. Publications
		Math\'ematiques},
	NUMBER = {81},
	YEAR = {1995},
	PAGES = {73--205},
	ISSN = {0073-8301,1618-1913},
	MRCLASS = {60E15 (60D05)},
	MRNUMBER = {1361756},
	MRREVIEWER = {G.\ Schechtman},
	URL = {http://www.numdam.org/item?id=PMIHES_1995__81__73_0},
}

@phdthesis{W99,
	Author = {Wu, Jian-Liang},
	School = {Shandong University},
	Title = {On the linear arboricity of graphs},
	Year = {1999}}

@article {W24,
	AUTHOR = {Wdowinski, Ronen},
	TITLE = {On an {$f$}-coloring generalization of linear arboricity of
		multigraphs},
	JOURNAL = {Discrete Math.},
	FJOURNAL = {Discrete Mathematics},
	VOLUME = {347},
	YEAR = {2024},
	NUMBER = {2},
	PAGES = {Paper No. 113777, 11},
	ISSN = {0012-365X,1872-681X},
	MRCLASS = {05C15 (05C05 05C70 05C72)},
	MRNUMBER = {4664952},
	MRREVIEWER = {Qiang\ Sun},
	DOI = {10.1016/j.disc.2023.113777},
	URL = {https://doi.org/10.1016/j.disc.2023.113777},
}

@misc{CDGHMM25,
      title={New bounds for linear arboricity and related problems}, 
      author={Micha Christoph and Nemanja Draganić and António Girão and Eoin Hurley and Lukas Michel and Alp Müyesser},
      year={2025},
      eprint={2507.20500},
      archivePrefix={arXiv},
      primaryClass={math.CO},
      url={https://arxiv.org/abs/2507.20500}, 
}

\newpage
\appendix

\section{Proof of Claim~\ref{cla:MR-edge-bounds}}\label{sec:MR-edge-bounds}
The proof of Claim~\ref{cla:MR-edge-bounds} can be found in Appendix B of Lang and Postle~\cite{LP23} or the monograph of Molloy and Reed~\cite{MR13} for graphs. Although our settings are for digraphs, there are only a few changes in the proof.
The following arguments are therefore almost  identical to the ones of Lang and Postle.

We prove  Claim~\ref{cla:MR-edge-bounds} using a concentration analysis.
We first bound the expectations of the random variables in Claim~\ref{cla:MR-edge-bounds} (Subsection~\ref{sec:MR-expectation}), then show that these random variables are highly concentrated around their expectations (Subsection~\ref{sec:MR-concentration}).

\subsection{Expectation} 
\label{sec:MR-expectation}
The following three claims bound the expectations of $|R'^{+}(v,c)|$, $|R'^-(v,c)|$, $|L'(e)|$, $|N'^{+}(v,c)|$ and $|N'^-(v,c)|$.
\begin{claim}[{\cite[Claim 23]{LP23}}]
	We have	$\Exp(|R'^{+}(v,c)|) $, $\Exp(|R'^-(v,c)|) \le  R_i \cdot  (1-  \p\R_i^2)$ for every vertex $v$ and color $c$.
\end{claim}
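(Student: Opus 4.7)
The plan is to compute, for each individual arc in $R^{+}(v,c)$ (respectively $R^-(v,c)$), the probability that it remains uncolored after step~\ref{step:unassign-and-coinflip1} of the random coloring procedure, and then to conclude by linearity of expectation using the hypothesis $|R^{\pm}(v,c)| \leq R_i$ given in condition (c) of Claim~\ref{cla:nibbler}.

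The first observation is that the sets $\Res(w)$ are chosen once and for all by Lemma~\ref{lem:reserved-colors} and are never altered by the random coloring procedure. Hence for an arc $vu \in R^{+}(v,c)$, the condition $c \in \Res(u)$ persists, and $vu$ lies in $R'^{+}(v,c)$ if and only if $vu$ is still uncolored under $\gamma'$. Thus it suffices to show that each such arc is uncolored after step~\ref{step:unassign-and-coinflip1} with probability exactly $1-\p \R_i^2$.

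This is the point where the definition of $\Eq$ in~\eqref{equ:Eq} is tuned for the task. For any arc $e=xy$ and color $c' \in L(e)$, conditional on $e$ being assigned $c'$ in step~\ref{step:assign1}, the probability that no other arc of $N^{+}(x,c')\cup N^{-}(y,c')\setminus\{e\}$ is assigned $c'$ equals $P(e,c')$; and given this no-conflict event, the $\Eq$-coin flip retains $c'$ on $e$ with probability $1-\Eq(e,c')=\R_i^2/P(e,c')$. Multiplying cancels the $P(e,c')$ factor, so the retention probability of an assigned color is exactly $\R_i^2$. Summing over the $L_i=|L(e)|$ possible color choices (each with conditional probability $1/L_i$) and multiplying by the activation probability $\p$, the total probability that $e$ retains some color is $\p \R_i^2$, and therefore the probability that $e$ stays uncolored in $\gamma'$ is $1-\p \R_i^2$.

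Applying this to every arc in $R^{+}(v,c)$ and invoking linearity of expectation yields $\Exp(|R'^{+}(v,c)|)\leq R_i\cdot(1-\p \R_i^2)$; the bound for $|R'^-(v,c)|$ follows from the symmetric argument obtained by swapping the roles of in- and out-neighborhoods. The one side check is that $\Eq(e,c')\in[0,1]$, i.e., $P(e,c') \geq \R_i^2$, but this follows immediately from condition (b) of Claim~\ref{cla:nibbler} together with the definition $\R_i=(1-\p/L_i)^{N_i-1}$, since $e \in N^{+}(x,c')\cap N^{-}(y,c')$ forces $|N^{+}(x,c')\setminus\{e\}|+|N^{-}(y,c')\setminus\{e\}|\leq 2N_i-2$. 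I do not anticipate any real obstacle here: the claim is essentially a direct consequence of how $\Eq$ was engineered, and the whole argument is just linearity of expectation once the per-arc retention probability has been identified.
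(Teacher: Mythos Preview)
Your proof is correct and takes essentially the same approach as the paper: compute the per-arc retention probability using the definition of $\Eq$ (showing it equals $\R_i^2$ regardless of the color assigned), deduce that each arc remains uncolored with probability $1-\p\R_i^2$, and conclude by linearity of expectation and the bound $|R^{\pm}(v,c)|\le R_i$. The only addition is your explicit verification that $\Eq(e,c')\in[0,1]$, which the paper leaves implicit.
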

\begin{proof}
	For any arc $e=wu$,
	suppose in the random coloring procedure, $e$ is activated in step~\ref{step:activate1} and assigned a color $x \in L(e)$ in step~\ref{step:assign1}.
	Recall the definition of $\R_i$, $\K_i$ in  Setup~\ref{Setup}  and $\Eq(e,c)$ in~\eqref{equ:Eq}.
	It follows that the probability that $e$ retains $x$ after step~\ref{step:unassign-and-coinflip1} is
	\begin{align}\label{eq:retain}
		\left(1-  \frac{\p}{L_i} \right)^{| N^-(u,x)|+| N^{+}(w,x)|-2} \cdot (1-\Eq(e,x)) =  \R_i^2,
	\end{align}
	where each factor $1-  \frac{\p}{L_i}$ represents the probability that none of $N^{-}(u,x)\cup N^{+}(w,x) \backslash \{e\}$ is assigned $x$.
	Hence, if $e$ is uncolored at the beginning, then the probability that $e$ is still uncolored after step~\ref{step:unassign-and-coinflip1} is $1-\p\R_i^2$.
    Note that $R'^{+}(v,c)$ and $R'^-(v,c)$ are the sets of arcs in $R^{+}(v,c)$ and $R^-(v,c)$, respectively, that are still uncolored after step~\ref{step:unassign-and-coinflip1}.
	Therefore the claim follows from linearity of expectation.
\end{proof}
\begin{claim}[{\cite[Claim 24]{LP23}}]
	We have	$\Exp(|L'(e)|) \geq L_i \cdot \K_i^2 $ for every arc $e$ and color $c$.
\end{claim}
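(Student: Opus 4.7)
The plan is to reduce the claim to a per-color probability bound via linearity of expectation, then analyze the two list-removal rules in step~\ref{step:update-lists-and-coinflip1} separately. Since $|L(e)|=L_i$ by hypothesis~\ref{itm:list-size-before} of Claim~\ref{cla:nibbler}, we have $\Exp(|L'(e)|)=\sum_{c \in L(e)} \Pr(c \in L'(e))$, so it suffices to show that $\Pr(c \in L'(e)) \geq \K_i^2$ for every $c \in L(e)$. Fix $c \in L(e)$ with $e=uv$: color $c$ survives in $L'(e)$ exactly when neither the $N^{+}(u,c)$-update nor the $N^{-}(v,c)$-update in step~\ref{step:update-lists-and-coinflip1} removes it.

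First I would handle each side separately. By the identity~\eqref{eq:retain} established in the proof of the previous claim, each arc $f \in N^{+}(u,c)$ retains $c$ with conditional probability $\R_i^2$ given that it is assigned $c$, hence unconditionally with probability $(\p/L_i)\R_i^2$. These retention events are pairwise disjoint across $f \in N^{+}(u,c)$ because any two such arcs share vertex $u$ and conflict in step~\ref{step:unassign-and-coinflip1}. Setting $a:=|N^{+}(u,c)|(\p/L_i)\R_i^2$ and $b:=|N^{-}(v,c)|(\p/L_i)\R_i^2$, color $c$ survives the $N^{+}(u,c)$-update iff either $e$ is the (unique) retainer, or no arc in $N^{+}(u,c)$ retains $c$ and the coin flip $\Vq^{+}(u,c)$ fails. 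Plugging in the definition~\eqref{equ:Vq^+} gives
\[\Pr(c \text{ survives the } N^{+}(u,c)\text{-update}) \;=\; \frac{\p}{L_i}\R_i^2 + (1-a)\cdot\frac{\K_i}{1-a} \;=\; \K_i + \frac{\p}{L_i}\R_i^2,\]
and the symmetric analysis using~\eqref{equ:Vq^-} gives the same bound for the $N^{-}(v,c)$-update. In particular each side succeeds with probability at least $\K_i$.

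Next I would combine the two sides. Let $E_R$ denote the event that $e$ itself retains $c$, and let $A^{\pm}$ denote the event that no arc in $N^{\pm}(\cdot,c)$ retains $c$. Then $E_R$ forces both updates to preserve $c$, while on the complement of $E_R$ survival requires both $A^{+}$ together with ``$\Vq^{+}(u,c)$ fails'' and $A^{-}$ together with ``$\Vq^{-}(v,c)$ fails''. Since $E_R$ and $A^{+} \wedge A^{-}$ are disjoint (any arc retaining $c$ in $N^{+}(u,c)$ or $N^{-}(v,c)$ violates $A^{\pm}$), and since the two coin flips are mutually independent and independent of all arc assignments, the probabilities factor to yield
\[\Pr(c \in L'(e)) \;=\; \frac{\p}{L_i}\R_i^2 \;+\; \Pr(A^{+} \wedge A^{-}) \cdot \frac{\K_i^2}{(1-a)(1-b)}.\]

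The hard part is establishing the positive-correlation inequality $\Pr(A^{+} \wedge A^{-}) \geq (1-a)(1-b)$, equivalently that the events ``some arc of $N^{+}(u,c)$ retains $c$'' and ``some arc of $N^{-}(v,c)$ retains $c$'' are positively correlated. The dependence between these two events arises from the common cause $E_R$ (which simultaneously forces both) together with cross-pair interactions between arcs $f \in N^{+}(u,c) \setminus \{e\}$ and $g \in N^{-}(v,c) \setminus \{e\}$ whose conflict-resolution neighborhoods are essentially disjoint. A direct expansion by inclusion--exclusion over pairs $(f,g)$, following the graph-case argument of~\cite[Claim 24]{LP23} and~\cite[Chapter 14]{MR13}, yields the required inequality. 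Substituting this into the decomposition above gives $\Pr(c \in L'(e)) \geq (\p/L_i)\R_i^2 + \K_i^2 \geq \K_i^2$, which completes the proof.
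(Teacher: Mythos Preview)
Your proposal is correct and takes essentially the same route as the paper: reduce to $\Pr(c\in L'(e))\ge \K_i^2$ by linearity, then establish the needed correlation via inclusion--exclusion over pairs of arcs retaining $c$. The only difference is bookkeeping: the paper works with the events $K_u,K_v$ that no arc of $N^{+}(u,c)\setminus\{e\}$, respectively $N^{-}(v,c)\setminus\{e\}$, retains $c$, and bounds $\Pr(K_u\cap K_v)\ge(1-a)(1-b)$ directly, whereas you separate out $E_R$ and need the slightly stronger inequality $\Pr(A^{+}\wedge A^{-})\ge(1-a)(1-b)$ (stronger since $A^{+}\wedge A^{-}=K_u\cap K_v\cap\overline{E_R}$). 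That stronger version does follow from the same pairwise expansion, but it requires one extra line you should make explicit: after the pair sum one needs $(|N^{+}(u,c)|+|N^{-}(v,c)|-1)\tfrac{\p}{L_i}\R_i^2\le 1$, which holds since $N_i\le L_i$ and $\p=1/4$.
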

\begin{proof}
	Consider an arc $e = uv$ and a color $c \in L_i(e)$. 
	We show that the probability that $c \in L'(e)$ is at least $\K_i^2$.
	From this, the claim follows from $|L_i(e)|=L_i$ and linearity of expectation.
	
	Let $K_u$, $K_v$ be the events that no arc in $N^{+}(u,c) \sm \{e\}$, respectively $N^-(v,c) \sm \{e\}$
	retains $c$ after step~\ref{step:unassign-and-coinflip1} of the procedure. 
	It turns out that the simplest way to compute $\Pr(K_u \cap K_v)$ is
	through the indirect route of computing $\Pr(\overline{K_u} \cup \overline{K_v})$ which is equal to
	$1 - \Pr(K_u \cap K_v)$. 
	Now, by the most basic case of the Inclusion-Exclusion Principle, 
	$\Pr(\overline{K_u} \cup \overline{K_v})$ is equal to $\Pr(\overline{K_u})+ \Pr(\overline{K_v})- \Pr(\overline{K_u} \cap \overline{K_v})$.
	Following the computation in \eqref{eq:retain} we know that $$\Pr(\overline{K_u}) + \Pr(\overline{K_v}) =  (|N^{+}(u,c)|+|N^-(v,c)|-2)  \frac{\p}{L_i} \R_i^2,$$ so we just need to
	bound $\Pr(\overline{K_u} \cap \overline{K_v})$.
	
	$\Pr(\overline{K_u} \cap \overline{K_v})$ is the probability that there is some pair of arcs $e_1 = uw \in N^{+}(u,c)\sm \{e\}$, and $e_2 = xv \in  N^-(v,c)\sm \{e\}$ such that $e_1$ and $e_2$ are
	both activated, receive and retain $c$ during the procedure ($w=x$ is possible).
	Now, for each such pair, let $R_{e_1 ,e_2}$ be the event that $e_1$ and $e_2$ both retain $c$ after step~\ref{step:unassign-and-coinflip1}.
	It follows that
	\begin{align*}
		\Pr(R_{e_1 ,e_2})= \left(\frac{\p}{L_i}\right)^2 \left(1- \frac{\p}{L_i}\right)^{|N^{+}(u,c) \cup N^-(w,c) \cup N^-(v,c)  \cup N^{+}(x,c)|-2} (1-\Eq(e_1,c)) (1-\Eq(e_2,c)).
	\end{align*}
	Since \[\left(1- \frac{\p}{L_i}\right)^{|N^{+}(u,c) \cup N^-(w,c) \cup N^-(v,c)  \cup N^{+}(x,c)|-2} > \left(1- \frac{\p}{L_i}\right)^{|N^{+}(u,c)| + |N^-(w,c)| -2 + |N^-(v,c)|  + |N^{+}(x,c)|-2},
    \]
	we obtain:
	\begin{align*}
		\Pr(R_{e_1 ,e_2}) >   \left(\frac{\p}{L_i}  \R_i^2 \right)^2,
	\end{align*}
	where the computation in \eqref{eq:retain} is used again.
    
	For any two distinct pairs $( e_1, e_2)$ and $( e_1',e_2')$, it is impossible for $R_{e_1 ,e_2}$
	and $R_{e_1' ,e_2'}$ to both hold. 
	Therefore the probability that $R_{e_1 ,e_2}$ holds for at least one pair, is equal to the sum over all pairs $e_1, e_2$ of $\Pr(R_{e_1 ,e_2})$, which by the above computation yields:
	\begin{align*}
		\Pr(\overline{K_u} \cap \overline{K_v}) &\geq  \left((|N^{+}(u,c)|-1)(|N^-(v,c)|-1)\right)  \left(\frac{\p}{L_i}  \R_i^2 \right)^2.
	\end{align*}
	Combining this with our bound  on $\Pr(\overline{K_u})+ \Pr(\overline{K_v})$, we see that:
	\begin{align*}
		\Pr( {K_u} \cap {K_v}) &\geq 1 -   (|N^{+}(u,c)|+|N^-(v,c)|-2)  \frac{\p}{L_i} \R_i^2  
		\\&\quad+ (|N^{+}(u,c)|-1)(|N^-(v,c)|-1)  \left(\frac{\p}{L_i}  \R_i^2 \right)^2
		\\&\geq \left(1-  \frac{ \p}{L_i} |N^{+}(u,c)| \R_i^2 \right) \left(1-  \frac{\p}{L_i}|N^-(v,c)|  \R_i^2 \right) . 
	\end{align*}
	Recall the definition of $\Vq^{+}$ in~\eqref{equ:Vq^+} and $\Vq^-$ in~\eqref{equ:Vq^-}.
	It follows  that
	\begin{align*}
		\Pr( c \in L'(e)) &=\Pr(K_u \cap K_v)(1-\Vq^{+}(u,c))  (1-\Vq^-(v,c))  
		\\&\geq \left(1 - \p \frac{|N^{+}(u,c)|}{L_i} \R_i^2\right)(1-\Vq^{+}(u,c)) \cdot  \left(1 - \p \frac{|N^-(v,c)|}{L_i} \R_i^2\right)(1-\Vq^-(v,c)) 
		\\&= \K_i^2,  
	\end{align*}
	as desired.
\end{proof}
It remains to deal with $|N'^{+}(v,c)|$ and $|N'^-(v,c)|$.
However, these random variables are not concentrated around their expectations, 
since if color $c$ is assigned to one of the arcs of $N^{+}(v,c)$ ($N^-(v,c)$ respectively), then $|N'^{+}(v,c)|$ ($|N'^-(v,c)|$ respectively) drops immediately to zero.
We will therefore carry out the expectation and concentration analysis for the size of a different set $N^{*+}(v,c)$ ($N^{*-}(v,c)$ respectively), which ignores the assignments of $c$ to arcs of $N^{+}(v,c)$ ($N^-(v,c)$ respectively). 
More precisely, let $N^{*+}(v,c)$ ($N^{*-}(v,c)$ respectively) be the set of arcs $vu \in N^{+}(v,c)$  ($uv \in N^{-}(v,c)$ respectively) such that $vu$ ($uv$ respectively) is still uncolored after step~\ref{step:unassign-and-coinflip1}, 
and in step~\ref{step:update-lists-and-coinflip1},
$c$ is not removed from $L(e)$ for any arc $e\in N^-(u,c)$ ($N^+(u,c)$ respectively).
Since $N'^{+}(v,c) \subset N^{*+}(v,c)$ and $N'^-(v,c) \subset N^{*-}(v,c)$, it suffices to focus the analysis on $|N^{*+}(v,c)|$ and $|N^{*-}(v,c)|$ to give upper bounds on $|N'^{+}(v,c)|$ and $|N'^-(v,c)|$.
\begin{claim}[{\cite[Claim 25]{LP23}}]
	We have	$\Exp(|N^{*+}(v,c)|)$, $\Exp(|N^{*-}(v,c)|) \leq N_i \cdot  \K_i  \cdot \left(1- \p \R_i^2   \right) +1$ for every vertex $v$ and color $c$.
\end{claim}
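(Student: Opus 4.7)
By linearity of expectation it suffices to bound $\Pr(vu \in N^{*+}(v,c))$ uniformly for a fixed arc $vu \in N^{+}(v,c)$ and multiply by $|N^{+}(v,c)| \le N_i$. Unpacking the definition, the event $\{vu \in N^{*+}(v,c)\}$ is the conjunction of three conditions: (a) $vu$ is uncolored after step~\ref{step:unassign-and-coinflip1}; (b) no arc of $N^-(u,c)$ retains $c$ after step~\ref{step:unassign-and-coinflip1}; and (c) the $\Vq^-(u,c)$ coin, independent of everything else, does not trigger.

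The events ``$e$ retains $c$'' for $e \in N^-(u,c)$ are pairwise disjoint: two simultaneous such retentions would give $u$ two in-arcs of color $c$, violating the $(1,1)$-coloring constraint. Combining this mutual exclusivity with the per-arc retention probability $(\p/L_i)\R_i^2$ from~\eqref{eq:retain} gives $\Pr(\text{(b)}) = 1 - |N^-(u,c)|(\p/L_i)\R_i^2$, and together with the definition~\eqref{equ:Vq^-} of $\Vq^-(u,c)$ this produces the clean identity $\Pr(\text{(b)})\,(1 - \Vq^-(u,c)) = \K_i$. This is the one-sided analogue of the $\K_i^2$ factor that drove the earlier bound $\Exp(|L'(e)|) \ge L_i\K_i^2$, and it is exactly why the target bound has a $\K_i$ (rather than $\K_i^2$) in it.

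The remaining work is to show $\Pr(\text{(a)} \wedge \text{(b)}) \le (1 - p\R_i^2)\Pr(\text{(b)}) + O((\p/L_i)\R_i^2)$; multiplying by $1 - \Vq^-(u,c)$ and summing over the at most $N_i \le L_i$ arcs of $N^{+}(v,c)$ (using Lemma~\ref{lem:sizes}) yields the leading contribution $N_i\K_i(1 - p\R_i^2)$ while the cumulative error is at most $p\R_i^2 \le 1$, matching the claimed ``$+1$''. I would establish this inequality via a mild negative-correlation argument: if $vu$ is uncolored then it contributes no $c$-conflict with other arcs of $N^-(u,c)$, so (b) is in fact slightly less likely under (a). Rigorously, one conditions on $vu$'s random choices: if $vu$ is assigned $c$ then (b) is automatic (no other arc of $N^-(u,c)$ can retain $c$ thanks to the conflict with $vu$), and if $vu$ is assigned $c' \neq c$ or not activated then the randomness driving (b) is nearly independent of that driving (a), since distinct color classes decouple. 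A short inclusion-exclusion computation in the spirit of the $\Pr(\overline{K_u} \cap \overline{K_v})$ estimate used in the proof of $\Exp(|L'(e)|) \ge L_i\K_i^2$ then closes the bound, and $\Exp(|N^{*-}(v,c)|)$ follows by reversing arcs.

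The main obstacle is this last disentanglement step: because $vu$ itself lies in $N^-(u,c)$, the random variables driving (a) and (b) are not independent, and one must condition on $vu$'s color assignment to separate them. Once this is handled, the argument is a direct one-sided adaptation of the $\Exp(|L'(e)|)$ calculation, with the $(1,1)$-coloring at $u$ playing the role that proper edge coloring plays in the undirected setting to produce the mutually exclusive retention events that underpin the entire estimate.
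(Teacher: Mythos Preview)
Your proposal is correct and essentially matches the paper's proof: the same decomposition into (a), (b), (c), the same use of the $\Vq^-$ identity to produce the factor $\K_i$, and the same inclusion-exclusion ``in the spirit of the $\Pr(\overline{K_u}\cap\overline{K_v})$ estimate'' to control $\Pr(\text{(a)} \wedge \text{(b)})$. The paper executes that last step via the identity $\Pr(A \cap B) = \Pr(A) - \Pr(\overline{B}) + \Pr(\overline{A} \cap \overline{B})$ and a disjoint decomposition of $\overline{A}\cap\overline{B}$ into the events $Z(x,f)=\{vu\text{ retains }x\}\cap\{f\text{ retains }c\}$, which yields exactly the $(\p/L_i)\R_i^2$ error you anticipated and makes the conditioning heuristic unnecessary; one small imprecision in your sketch is that ``(b) is automatic when $vu$ is assigned $c$'' also requires (a), since $vu$ itself might retain $c$.
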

\begin{proof}
	We prove this for $N^{*+}(v,c)$. The proof for $N^{*-}(v,c)$ is strictly symmetric and requires no additional arguments.
	We will show that for each $e = vu$ in $N^{+}(v,c)$, we have: $\Pr(e \in N^{*+}(v,c)) \leq \K_i  \cdot \left(1- \p \R_i^2   \right) + \frac{1}{L_i}$.
	As before this implies the result by linearity of expectation.
	
	We define $A$ to be the event that $e$ is still uncolored after step~\ref{step:unassign-and-coinflip1} and $B$ to be the event that no arc in $N^-(u,c)$ retains $c$.
	We wish to bound $\Pr(A \cap B)$.
	Once again, we proceed in an indirect way, and focus instead on $\Pr(\overline{A} \cap \overline{B})$, showing that $\Pr(\overline{A} \cap \overline{B}) \leq \p^2 \frac{|N^-(u,c)|}{L_i} \R_i^4+ \frac{1}{L_i}$, thus implying
	\begin{align*}
		\Pr(A \cap B) &= \Pr(A) - \Pr(\overline{B}) + \Pr(\overline{A} \cap \overline{B}) 
		\\ &\leq \left(1-\p \R_i^2 \right) - \p \frac{|N^-(u,c)|}{L_i}\R_i^2 + \p^2 \frac{|N^-(u,c)|}{L_i} \R_i^4+ \frac{1}{L_i}
		\\ &= \left(1-\p \frac{|N^-(u,c)|}{L_i}\R_i^2 \right) \left(1-\p \R_i^2 \right) + \frac{1}{L_i}.
	\end{align*}
	Therefore 
	\begin{align*}
		\Pr(e \in N^{*+}(v,c)) = \Pr(A \cap B) (1-\Vq^-(u,c)) \leq \K_i \left(1-\p \R_i^2 \right) + \frac{1}{L_i}.
	\end{align*}
	
	For each color $x \in L(e)$ and arc $f=wu$ in $N^-(u,c) \sm \{e\}$, we define $Z(x,f)$ to be the event that $e$ retains $x$ and $f$ retains $c$.
	For each $x\neq c$, we have
	\begin{align*}
		\Pr(Z(x,f)) &= \left(\frac{\p}{L_i}\right)^2 \left(1-\frac{2\p}{L_i}\right)^{\left|\left[ (N^-(u,x)\cap N^-(u,c)) \right] \sm \{e,f\}\right|}
		\\&\quad \cdot \left(1-\frac{\p}{L_i}\right)^{\left|\left[N^{+}(v,x) \cup N^{+}(w,c)\cup N^-(u,x)\cup N^-(u,c) \right] \sm [(N^-(u,x)\cap N^-(u,c)) \cup \{e,f\}]\right|}
		\\&\quad \cdot (1-\Eq(e,x)) (1-\Eq(f,c))
		\\&\leq \left(\frac{\p}{L_i}\right)^2
		\\&\quad \cdot \left(1-\frac{\p}{L_i}\right)^{\left|\left[N^{+}(v,x) \cup N^{+}(w,c)\cup N^-(u,x)\cup N^-(u,c) \right] \sm \{e,f\}| +|[N^-(u,x)\cap N^-(u,c)] \sm \{e,f\}\right|}
		\\&\quad \cdot (1-\Eq(e,x)) (1-\Eq(f,c)).
	\end{align*}
	Note that
	\begin{align*}
		&\quad |\left[N^{+}(v,x) \cup N^{+}(w,c)\cup N^-(u,x)\cup N^-(u,c) \right] \sm \{e,f\}|  
		\\&\quad +|[N^-(u,x)\cap N^-(u,c)] \sm \{e,f\}| 
		\\&= |N^{+}(v,x)\sm \{e\}| + |N^-(u,x)\sm \{e,f\}| + |N^-(u,c)\sm \{e,f\}| +  |N^{+}(w,c)\sm \{f\}|
		\\&\geq |N^{+}(v,x)| + |N^-(u,x) | + |N^-(u,c) | +  |N^{+}(w,c) | - 6.
	\end{align*}
	Hence, following the computation in \eqref{eq:retain} we obtain
	\begin{align*}
		\Pr(Z(x,f)) \leq \left(\frac{\p}{L_i} \right)^2 \left(1-\frac{\p}{L_i}\right)^{-2}  \R_i^4 = \frac{\p^2}{(L_i -\p)^2} \R_i^4.
	\end{align*}
	Since the events $Z(x,f)$ are disjoint, we have:
	\begin{align*}
		\Pr(\overline{A} \cap \overline{B}) &= \frac{\p}{L_i} \R_i^2 + \sum_{x \in L(e)\sm\{c\},f\in N^-(u,c)\sm \{e\}}  \Pr(Z(x,f))  \\
		&\leq \frac{\p}{L_i} \R_i^2 + (L_i -1) (|N^-(u,c)| -1) \frac{\p^2}{(L_i -\p)^2} \R_i^4  \\
		&< \frac{1}{L_i} + \p^2\frac{|N^-(u,c)|}{L_i}\R_i^4.   \qedhere
	\end{align*} 
\end{proof}
\subsection{Concentration}\label{sec:MR-concentration}
We use Talagrand's inequality to show the concentration.
\begin{theorem}[Talagrand's inequality~\cite{Tal95}]\label{thm:talagrand}
	Let $X$ be a non-negative random variable determined by the independent trials $T_1,\ldots,T_n$. Suppose that for every set of possible outcomes of the trials, we have:
	\begin{enumerate}[(i)]
		\item changing the outcome of any one trial can affect $X$ by at most $k$ and
		\item for each $s > 0$, if $X \geq s$ then there is a set of at most $rs$ trials whose outcomes certify $X \geq s$.
	\end{enumerate}
	Then for any $t > 96k \sqrt{r \Exp(X)} + 128rk^2$ we have
	\begin{align*}
		\Pr(|X-\Exp(X)| > t) \leq 4 \exp\left({-\frac{t^2}{8k^2r(4\Exp(X)+t)}}\right).
	\end{align*}
\end{theorem}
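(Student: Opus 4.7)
The plan is to establish Talagrand's inequality via the abstract convex-distance concentration principle on product probability spaces, then translate conditions (i) and (ii) into a lower bound on the convex distance to deduce the stated tail estimate on $X$.

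First, I view the trials $T_1,\dots,T_n$ as a random point $\omega = (\omega_1,\dots,\omega_n)$ in $\Omega = \prod_{i=1}^n \Omega_i$ under the product measure. For a measurable set $A \subset \Omega$, define the Talagrand convex distance
\[
d_T(A,\omega) \;=\; \sup_{\substack{\alpha \in \REALS_{\ge 0}^n \\ |\alpha|_2 = 1}}\ \inf_{\omega' \in A}\ \sum_{i:\,\omega_i \neq \omega'_i} \alpha_i.
\]
The abstract inequality to be proved is
\[
\Pr(A)\cdot \Exp\bigl[\exp(d_T(A,\omega)^2/4)\bigr] \;\le\; 1,
\]
and I would establish it by induction on $n$: the inductive step conditions on $\omega_n$, considers two candidate projections of $A$ onto $\prod_{i<n}\Omega_i$, interpolates between them with a weight $\lambda \in [0,1]$ optimised via Hölder's inequality, and invokes the one-dimensional convexity lemma $\inf_{\lambda \in [0,1]} \lambda^{-a}(1-\lambda)^{-b}\exp(-\lambda^2/4) \le \exp(\cdots)$ that underpins Talagrand's method. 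Markov's inequality then yields $\Pr(A)\cdot\Pr(d_T(A,\omega) \ge u) \le e^{-u^2/4}$ for every $u \ge 0$.

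Second, I translate (i) and (ii) into a lower bound on $d_T(A,\omega)$ against the sublevel set $A = \{\omega' : X(\omega') \le M\}$, where $M$ is a median of $X$. Fix $\omega$ with $X(\omega) \ge M+t$. By (ii) there is a certificate $I \subset [n]$ of size $|I|\le r(M+t)$ such that $X(\omega'') \ge M+t$ for every $\omega''$ agreeing with $\omega$ on $I$. For any $\omega' \in A$, let $J = \{i \in I: \omega_i \neq \omega'_i\}$; by (i) the variable can drop by at most $k$ per changed coordinate, so reaching $A$ from $\omega$ forces $|J| > t/k$. Taking $\alpha_i = 1/\sqrt{|I|}$ for $i \in I$ and $\alpha_i = 0$ otherwise in the definition gives
\[
d_T(A,\omega) \;\ge\; \frac{|J|}{\sqrt{|I|}} \;\ge\; \frac{t/k}{\sqrt{r(M+t)}}.
\]
Applying the abstract inequality with $u = t/(k\sqrt{r(M+t)})$ and $\Pr(A) \ge 1/2$ yields $\Pr(X \ge M+t) \le 2\exp(-t^2/(4k^2 r(M+t)))$; a symmetric argument with $A = \{X \le M-t\}$ and a certificate at a point where $X \ge M$ bounds the lower tail, producing $\Pr(|X-M| > t) \le 4\exp(-t^2/(4k^2 r(M+t)))$.

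Finally, I convert the median-centred bound to a mean-centred one. Integrating the tail bound above gives $|M - \Exp(X)| \le C_1 k\sqrt{r\Exp(X)} + C_2 r k^2$ for absolute constants $C_1,C_2$, and the hypothesis $t > 96k\sqrt{r\Exp(X)} + 128rk^2$ is calibrated precisely so that $|M - \Exp(X)| \le t/2$ and $M + t/2 \le 2(\Exp(X) + t)$; substituting into the median bound and replacing $M$ by $\Exp(X)$ absorbs these extra factors into the denominator and yields the stated form $4\exp(-t^2/(8k^2 r(4\Exp(X)+t)))$. I expect the main obstacle to be the inductive proof of the abstract inequality, which is the genuine content of Talagrand's method and requires the delicate one-dimensional convexity computation; translating (i) and (ii) into a distance bound is then routine, though pinning down the constants $96$, $128$, $4$, and $8$ in the median-to-mean conversion demands careful bookkeeping.
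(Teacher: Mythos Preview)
The paper does not prove this theorem at all: Theorem~\ref{thm:talagrand} is quoted with a citation to Talagrand's original paper and is used as a black box in the concentration analysis of Claims~\ref{cla:concentration-R}--\ref{cla:cycle-tweak}. So there is nothing in the paper to compare your argument against.

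That said, your outline is the standard derivation of this form of Talagrand's inequality (as presented, for instance, in Molloy and Reed's monograph): prove the abstract convex-distance inequality $\Pr(A)\cdot\Exp[e^{d_T(A,\omega)^2/4}]\le 1$ by induction on the number of coordinates, use the certificate hypothesis~(ii) together with the Lipschitz hypothesis~(i) to lower-bound $d_T$ against a sublevel set, obtain a median-centred tail bound, and then shift from median to mean using the hypothesis on $t$. The sketch is correct in spirit. One small point: in your lower-tail argument you should take $A=\{X\ge M\}$ and invoke a certificate at a point of $A$, not $A=\{X\le M-t\}$; the asymmetry between the two tails is that the certificate is always anchored at a point where $X$ is large. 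The constants $96,128,4,8$ are exactly those arising in Molloy--Reed's bookkeeping for the median-to-mean step, and your description of how they enter is accurate.
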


The following three claims contain the desired concentration bounds.

\begin{claim}[{\cite[Claim 26]{LP23}}]\label{cla:concentration-R}
	For every vertex $v$ and color $c$, we have
	\begin{align*} 	
		\Pr(\left||R'^{+}(v,c)|-\Exp(|R'^{+}(v,c)|)\right|> \tfrac{1}{2} \sqrt{R_i} \log^2 \D) \leq \frac{\D^{-10\log \D}}{5},
		\\ \Pr(\left||R'^-(v,c)|-\Exp(|R'^-(v,c)|)\right|> \tfrac{1}{2} \sqrt{R_i} \log^2 \D) \leq \frac{\D^{-10\log \D}}{5}.
	\end{align*}
\end{claim}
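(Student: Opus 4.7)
The plan is to apply Talagrand's inequality (Theorem~\ref{thm:talagrand}) to the random variable $X = |R'^{+}(v,c)|$; the bound for $|R'^{-}(v,c)|$ is established by the symmetric argument. Since step~\ref{step:update-lists-and-coinflip1} only updates lists and not which arcs are colored, $X$ is entirely determined by the outcomes of steps~\ref{step:activate1}--\ref{step:unassign-and-coinflip1}. The independent trials are, for each arc $e$ of $D$, the triple consisting of its activation bit, its assigned color drawn uniformly from $L(e)$ (if activated), and its Eq coin flip (if activated without initial conflict). From $R'^{+}(v,c) \subset R^{+}(v,c)$ and hypothesis~\ref{itm:reserved-size-before} of Claim~\ref{cla:nibbler} we get the trivial expectation bound $\Exp(X) \le R_i$.

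To verify Talagrand's Lipschitz condition~(i), I would observe that the status (colored or uncolored after step~\ref{step:unassign-and-coinflip1}) of each candidate $vu \in R^{+}(v,c)$ depends only on trials at arcs in $\{vu\} \cup N^{+}(v) \cup N^{-}(u)$, because these are the only arcs that can generate a step~\ref{step:unassign-and-coinflip1} conflict for $vu$. Hence a single trial at an arc $e_0$ affects the status of $vu$ only if $e_0 = vu$, or $e_0$ has tail $v$, or $e_0$ has head $u$. A short case analysis on a color switch $x \to y$ at $e_0$ then shows that only a constant number of candidates flip status: the key point is that if two distinct candidates $vu_1,vu_2 \in R^{+}(v,c)$ are assigned the same color $z$, they already mutually conflict through $N^{+}(v)$, so their status is independent of whether $e_0$ is also assigned $z$. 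Thus at most one candidate assigned $x$ and at most one candidate assigned $y$ can flip status, plus $e_0$ itself if $e_0 \in R^{+}(v,c)$, yielding a Lipschitz constant $k = O(1)$. For the certificate condition~(ii), if $X \ge s$ then $s$ candidate arcs $vu$ are uncolored after step~\ref{step:unassign-and-coinflip1}, and each such uncolored status is witnessed by at most $2$ trials: either the trial at $vu$ alone certifies non-activation or Eq-uncoloring, or else the trials at $vu$ and at a single conflicting arc $e'$ (both showing activation and the same assigned color) certify a step~\ref{step:unassign-and-coinflip1} conflict. Hence $r = 2$ suffices.

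Plugging $k = O(1)$, $r = 2$, $\Exp(X) \le R_i$, and $t = \tfrac{1}{2}\sqrt{R_i}\log^2 \D$ into Talagrand's inequality, the side condition $t > 96 k \sqrt{r \Exp(X)} + 128 r k^2$ holds for sufficiently large $\D$ since $R_i \ge \log^7 \D$ by Lemma~\ref{lem:sizes}. The exponent in Talagrand's conclusion then simplifies as
\begin{align*}
\frac{t^2}{8 k^2 r\,(4 \Exp(X) + t)} \;\ge\; \frac{t^2}{O(R_i)} \;=\; \Omega(\log^4 \D),
\end{align*}
so the deviation probability is at most $4\exp\bigl(-\Omega(\log^4 \D)\bigr) \le \D^{-10 \log \D}/5$ for $\D$ large enough, as required. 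The main obstacle I foresee is the careful bookkeeping for the Lipschitz constant: an arc $e_0 = va$ sharing its tail with many candidates could a priori disturb all of them at once, but the mutual-conflict observation above confines the net effect to $O(1)$ candidates. Once this structural point is pinned down, the rest is a routine application of Talagrand's inequality.
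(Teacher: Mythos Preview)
Your proposal is correct and follows essentially the same approach as the paper: apply Talagrand's inequality to $|R'^{+}(v,c)|$ (and symmetrically to $|R'^{-}(v,c)|$), using the mutual-conflict observation among out-arcs of $v$ to get a constant Lipschitz bound, a bounded-size certificate for each uncolored candidate, and the trivial bound $\Exp(X)\le R_i$ together with $R_i\ge\log^7\D$. The only differences are cosmetic---you bundle the activation/color/Eq-coin data at each arc into a single trial and take $r=2$, while the paper treats them as separate trials and uses $k=2$, $r=1$---but either choice of constants gives the required $\exp(-\Omega(\log^4\D))$ tail bound.
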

\begin{proof}
	We prove only for $R'^{+}(v,c)$ here, as the proof for $R'^-(v,c)$ is similar.
	Fix a vertex $v$, a color $c$ and let $R'^{+} = |R'^{+}(v,c)|$.
	We wish to apply Talagrand's inequality to $R'^{+}$ with $k=2$, $r=1$ and $t=\tfrac{1}{2}\sqrt{R_i} \log^2\D$.
	
	It easy to see that changing any of the outcome of the arc activation in step~\ref{step:activate1}, color assignment in step~\ref{step:assign1} or coin flip in step~\ref{step:unassign-and-coinflip1} and~\ref{step:update-lists-and-coinflip1} can affect the size of $R'^{+}$ by at most $2$.
	(Here it is important to recall that at most one arc of $R^{+}(v,c)$ can retain a specific color at the same time.)
	Moreover, for every arc $e=vu \in R'^{+}(v,c)$, there is either an activation event or an assignment of a color to an arc $vx$ or $wu$ that witnesses $e$ being still uncolored after step~\ref{step:unassign-and-coinflip1}, so $r=1$ is enough.
	Recall that by Lemma~\ref{lem:sizes}, $R_i \geq \log^7 \D$.
	Moreover, $\Exp(R'^{+}) \leq R_i $.
	Since $\D$ is large, we have $$t=\tfrac{1}{2}\sqrt{R_i} \log^2\D > 96k \sqrt{r \Exp(R'^{+})} + 128rk^2.$$ Therefore we obtain from Theorem~\ref{thm:talagrand} that 
	\begin{align*}
		\Pr(|R'^{+}-\Exp(R'^{+})| > t) \leq 4 \exp\left({-\frac{t^2}{8k^2r(4\Exp(R'^{+})+t)}}\right)\leq 4\exp\left(-\frac{t^2}{2^8R_i}\right) \leq \frac{\D^{-10\log \D}}{5},
	\end{align*}
	where $t \leq 4\Exp(R'^{+}) \leq 4R_i$ is used in the second inequality.
\end{proof}
\begin{claim}[{\cite[Claim 27]{LP23}}]\label{cla:concentration-L}
	For every arc $e$, we have 
	\begin{align*} 	
		\Pr(\left||L'(e)|-\Exp(|L'(e)|)\right|> \tfrac{1}{2} \sqrt{L_i} \log^2 \D) \leq \frac{\D^{-10\log \D}}{5}.
	\end{align*}
\end{claim}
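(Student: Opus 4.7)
The plan is to apply Talagrand's inequality (Theorem~\ref{thm:talagrand}) to $|L'(e)|$, mirroring the proof of Claim~\ref{cla:concentration-R} for $|R'^{+}(v,c)|$. Fix an arc $e = uv$ and set $L' = |L'(e)|$. The random variable $L'$ is determined by the independent trials of the random coloring procedure: the arc activations of step~\ref{step:activate1}, the color assignments of step~\ref{step:assign1}, the $\Eq$-coin flips of step~\ref{step:unassign-and-coinflip1}, and the $\Vq^{+}, \Vq^{-}$-coin flips of step~\ref{step:update-lists-and-coinflip1}. I would verify the Talagrand hypotheses with $k = 2$ and $r$ equal to a small absolute constant, then invoke the inequality with $t = \tfrac{1}{2}\sqrt{L_i}\log^2 \D$ and the trivial bound $\Exp(L') \leq L_i$.

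For the Lipschitz parameter, changing the outcome of any one trial alters the membership of at most two colors in $L'(e)$: changing an arc's activation or color assignment can shift its retained color (one color losing retention, one gaining), each affecting only that color's status in $L'(e)$ through step~\ref{step:update-lists-and-coinflip1}; meanwhile, an $\Eq$- or $\Vq^{\pm}$-coin flip only affects the specific color it concerns. Hence $k = 2$ suffices.

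For the certification, rather than certifying $L' \geq s$ directly (which would require controlling ``non-removal'' events for each surviving color), I would certify the complementary variable $D = L_i - L'$ counting the colors removed from $L(e)$ during step~\ref{step:update-lists-and-coinflip1}; since $L' - \Exp(L') = -(D - \Exp(D))$, concentration of $D$ is exactly concentration of $L'$. For each removed color $c$, removal is caused by at least one of four events: the $\Vq^{+}(u, c)$-coin firing, the $\Vq^{-}(v, c)$-coin firing, a retention event in $N^{+}(u, c) \setminus \{e\}$, or a retention event in $N^{-}(v, c) \setminus \{e\}$. The key observation is that if the $\Vq$-coin at $u$ or $v$ for $c$ reports ``remove,'' then $c$ is removed from $L(e)$ regardless of any other trial: either no arc retains $c$ on that side and the $\Vq$-rule fires, or some arc does retain $c$ and the retention rule has already removed $c$; thus the coin outcome alone is a $1$-trial certificate. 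In the residual case where both $\Vq$-coins said ``don't remove,'' the removal must originate from a specific retention event, and a constant-sized certificate---the retaining arc's activation, its color assignment, its $\Eq$-coin, and a bounded list of neighboring color assignments needed to rule out conflicts---suffices by the standard bookkeeping of Molloy--Reed~\cite{MR13}. This gives $r = O(1)$.

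With $k = 2$, $r = O(1)$, $\Exp(D) \leq L_i$, and $L_i \geq \log^7 \D$ by Lemma~\ref{lem:sizes}, the hypothesis $t > 96 k \sqrt{r \Exp(D)} + 128 r k^2$ holds for $\D$ sufficiently large, so Theorem~\ref{thm:talagrand} yields
\[
\Pr\bigl(|L' - \Exp(L')| > t\bigr) = \Pr\bigl(|D - \Exp(D)| > t\bigr) \leq 4 \exp\!\left( -\frac{t^2}{c L_i} \right) \leq \frac{\D^{-10 \log \D}}{5},
\]
for a suitable absolute constant $c$, exactly paralleling the final computation of Claim~\ref{cla:concentration-R}. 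The main delicate point is the certification in the residual retention case: producing a constant-sized set of trials that forces removal of $c$ without enumerating all of $N^{+}(u, c) \cup N^{-}(v, c)$ is the crux, requiring the careful conflict-ruling-out trick of Molloy--Reed.
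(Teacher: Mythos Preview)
Your Lipschitz bound $k=2$ is correct, and your observation that a $\Vq^{\pm}$-coin outcome of ``remove'' serves by itself as a 1-trial certificate for that colour's removal is a nice one. The gap is in the residual retention case, and it is not a bookkeeping detail but the whole point.

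To certify that an arc $f = uw \in N^{+}(u,c)\setminus\{e\}$ \emph{retains} $c$, you must force: (i) $f$ activated, (ii) $f$ assigned $c$, (iii) the $\Eq(f,c)$-coin says ``keep'', and (iv) \emph{no} arc of $N^{+}(u,c)\cup N^{-}(w,c)\setminus\{f\}$ is assigned $c$. Condition (iv) is a conjunction over up to $2N_i-2$ independent trials; any proper subset of them leaves room for an outcome that creates a conflict and voids the retention of $c$ by $f$, after which (with both $\Vq$-coins saying ``don't remove'') $c$ need not leave $L(e)$ at all. So there is no $O(1)$-sized certificate here, your claimed $r=O(1)$ fails, and Talagrand cannot be applied to $D$ directly.

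The ``standard bookkeeping of Molloy--Reed'' you invoke is not a direct certificate for retention; it is precisely the device that circumvents this obstacle, and it is what the paper does. One writes $|L'(e)| = L_i - X - X'$, handles the coin-flip removals $X'$ by Chernoff, and expresses the retention count $X$ as an alternating sum of auxiliary variables $X_{m,n}$ counting colours \emph{assigned} to an arc on at least $m$ of the two sides $N^{+}(u,c)\setminus\{e\}$, $N^{-}(v,c)\setminus\{e\}$ and \emph{uncoloured} in step~\ref{step:unassign-and-coinflip1} on at least $n$ of them. Both ``assigned'' and ``uncoloured'' are positive events with genuine $O(1)$-trial witnesses (an assignment is one activation plus one colour choice; an uncolouring is witnessed by one conflicting assignment or one $\Eq$-coin), so each $X_{m,n}$ satisfies the hypotheses of Theorem~\ref{thm:talagrand} with $k=2$, $r=4$. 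Your proposal skips this decomposition and therefore does not close.
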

\begin{proof}
	Fix an arc $e=uv$ and let $L' = |L'(e)|$.
	Let $X$ be the number of colors $c \in L(e)$, which are retained by at least one arc in $(N^{+}(u,c) \cup N^-(v,c))\sm\{e\}$.
	For $0 \leq n \leq m \leq 2$, we define $Y_{m,n}$ to be the number of colors which are assigned to an arc in \emph{exactly} $m$ of $N^{+}(u,c)\sm \{e\}$, $N^-(v,c)\sm \{e\}$ and are removed from an arc in at least $n$ of $N^{+}(u,c)\sm \{e\}$, $N^-(v,c)\sm \{e\}$ during step~\ref{step:assign1} and~\ref{step:unassign-and-coinflip1} of the procedure.
	Similarly, we define $X_{m,n}$ to be the number of colors which are assigned to an arc in \emph{at least} $m$ of $N^{+}(u,c)\sm \{e\}$, $N^-(v,c)\sm \{e\}$ and are removed from an arc in at least $n$ of $N^{+}(u,c)\sm \{e\}$, $N^-(v,c)\sm \{e\}$ during step~\ref{step:assign1} and~\ref{step:unassign-and-coinflip1} of the procedure.
	Note that $Y_{2,n} = X_{2,n}$ and for $m<2$, $Y_{m,n} = X_{m,n} - X_{m+1,n}$.
	Making use of the very useful fact that for any $v$, if in step~\ref{step:unassign-and-coinflip1} a color $c$ is removed from at least one arc in $N^-(v,c)$ ($N^{+}(u,c)$ respectively), then it is removed from every arc in $N^-(v,c)$ ($N^{+}(u,c)$ respectively), we obtain that:
	\begin{align*}
		X=(Y_{2,0} - Y_{2,2}) + (Y_{1,0} - Y_{1,1}) = (X_{2,0} - X_{2,2}) + ((X_{1,0} - X_{2,0})-(X_{1,1} - X_{2,1})).
	\end{align*}
	
	Fix $1\leq m,n\leq 2$. We will show that $X_{m,n}$ is highly concentrated
	by applying Talagrand's inequality (Theorem~\ref{thm:talagrand}) to $X_{m,n}$ with $k=2$, $r=4$ and $t=  \tfrac{1}{14}\sqrt{L_i} \log^2\D$.
	First we check $k=2$. Note that changing the arc activation can only affect whether one color is counted by $X_{m,n}$, 
    changing the color assigned to an arc from $c_1$ to $c_2$ can only affect whether $c_1$ or $c_2$ are counted by $X_{m,n}$, and changing the decision to uncolor an arc in step~\ref{step:unassign-and-coinflip1} can only affect whether the color of the arc is counted by $X_{m,n}$.
	Secondly we check $r=4$. If $X_{m,n} \geq s$, then there is a set of at most $s(m+n)\le 4s$ outcomes which certify this fact, namely for each of the $s$ colors, $m$ arcs on which that color appears, along with $n$ (or fewer) outcomes which cause $n$ of those arcs to be uncolored.
	Recall that by Lemma~\ref{lem:sizes}, $L_i \geq \log^7 \D$.
	Moreover, $\Exp(X_{m,n}) \leq  L_i$.
	Since $$t=\tfrac{1}{14}\sqrt{L_i} \log^2\D > 96k \sqrt{r \Exp(X_{m,n})} + 128rk^2,$$ we obtain from Theorem~\ref{thm:talagrand} that 
	\begin{align*}
		\Pr(|X_{m,n}-\Exp(X_{m,n})| > t) \leq 4 \exp\left({-\frac{t^2}{8k^2r(4\Exp(X_{m,n})+t)}}\right)\leq \exp\left(-\frac{t^2}{2^{11} L_i}\right) \leq \frac{\D^{-10\log \D}}{35},
	\end{align*}
	where we used that $t, 4\Exp(X_{m,n}) \leq 4L_i$ in the second inequality.
	
	Finally, let $X'$ be the number of colors $c$ removed from $\bigcup_{f \in N^{+}(u,c)} L(f)$ or  $\bigcup_{f \in N^-(v,c)} L(f)$ in step~\ref{step:update-lists-and-coinflip1} by coin flips.
	It follows easily from the Chernoff Bound that $X'$ is  highly concentrated around its expectation.
	Since $|L'(e)| = |L(e)| - (X + X')$, this completes the proof.
\end{proof}
\begin{claim}[{\cite[Claim 28]{LP23}}]
	For every vertex $v$ and color $c$, we have
	\begin{align*} 	
		\Pr(\left||N^{*+}(v,c)|-\Exp(|N^{*+}(v,c)|)\right|> \tfrac{1}{2} \sqrt{N_i} \log^2 \D) \leq \frac{\D^{-10\log \D}}{5},
		\\ \Pr(\left||N^{*-}(v,c)|-\Exp(|N^{*-}(v,c)|)\right|> \tfrac{1}{2} \sqrt{N_i} \log^2 \D) \leq \frac{\D^{-10\log \D}}{5}.
	\end{align*}
\end{claim}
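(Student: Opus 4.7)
The plan is to prove the claim by applying Talagrand's inequality (Theorem~\ref{thm:talagrand}) to $|N^{*+}(v,c)|$ and $|N^{*-}(v,c)|$, paralleling Claims~\ref{cla:concentration-R} and~\ref{cla:concentration-L}. By symmetry it suffices to handle $|N^{*+}(v,c)|$, since reversing every arc swaps $N^{*+}$ and $N^{*-}$ and leaves the random procedure invariant. Fix $v,c$ and set $N^{*}=|N^{*+}(v,c)|$. The independent trials are, as before, the combined outcome (activation, assigned color, step~\ref{step:unassign-and-coinflip1} coin flip) for each arc, together with the step~\ref{step:update-lists-and-coinflip1} coin flips $\Vq^{\pm}(\cdot,\cdot)$ for each vertex-color pair.

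I would first verify the bounded-difference condition with some constant $k$ (for instance $k=4$). Changing a single arc $f=xy$'s trial can influence $N^{*}$ only via (i) $f$ itself, when $f\in N^+(v,c)$; (ii) whether $f$'s assigned color creates a conflict at $v$ or at the other endpoint $u$ of some arc $vu\in N^+(v,c)$, affecting whether $vu$ remains uncolored after step~\ref{step:unassign-and-coinflip1}; and (iii) whether $f\in N^-(u,c)$ retains $c$, which controls the step~\ref{step:update-lists-and-coinflip1} deletion at $u$ and therefore the membership of the single arc $vu$. Since only one arc per vertex can retain a given color, each of (i)--(iii) changes $N^{*}$ by $O(1)$; changing one coin flip $\Vq^-(u,c)$ affects at most one arc.

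The main obstacle is the certification condition. Membership of $vu$ in $N^{*+}(v,c)$ requires the universally quantified statement that \emph{no} arc of $N^-(u,c)$ retains $c$ in step~\ref{step:unassign-and-coinflip1}, which cannot be directly certified with $O(1)$ trials. I would resolve this exactly as in Claim~\ref{cla:concentration-L}: rewrite $|N^+(v,c)|-N^{*}$ as a signed combination of auxiliary "at least" counts analogous to $X_{m,n}$, for instance variables counting the number of arcs $vu\in N^+(v,c)$ for which at least $m$ arcs of $N^-(u,c)$ are assigned $c$ in step~\ref{step:assign1} with at least $n\le m\le 2$ of them retaining $c$ after step~\ref{step:unassign-and-coinflip1}, together with an arc-colored term and a term for the deletion coin flips. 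Each such "at least" count is certifiable by $O(m+n)=O(1)$ trials per counted object because it is existential, and the coin-flip contribution is controlled by a direct Chernoff argument using $|N^+(v,c)|\le N_i$.

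To finish, I would apply Talagrand to each of the $O(1)$ component variables with constant $k,r$ and deviation $t=c_0\sqrt{N_i}\log^2\D$ for a suitably small $c_0$, using $\Exp(N^{*})\le N_i\K_i(1-\p\R_i^2)+1\le N_i$ from the expectation analysis and $N_i\ge \log^7\D$ from Lemma~\ref{lem:sizes} to verify $t>96k\sqrt{r\,\Exp}+128rk^2$ for $\D$ large. Summing the $O(1)$ deviations and failure probabilities yields the total bound $\tfrac12\sqrt{N_i}\log^2\D$ with failure probability at most $\D^{-10\log\D}/5$. Applying the same argument to the reversed digraph yields the corresponding statement for $|N^{*-}(v,c)|$.
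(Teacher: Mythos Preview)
Your proposal is correct and takes essentially the same approach as the paper. The paper decomposes $N^{*+}(v,c)=A_{v,c}\setminus(B_{v,c}\cup C_{v,c})$, where $A_{v,c}$ are the arcs of $N^+(v,c)$ still uncolored after step~\ref{step:unassign-and-coinflip1}, $B_{v,c}\subset A_{v,c}$ are those $vu$ for which some arc of $N^-(u,c)$ retains $c$, and $C_{v,c}$ are those lost to the step~\ref{step:update-lists-and-coinflip1} coin flip; it then concentrates $|A_{v,c}|$ as in Claim~\ref{cla:concentration-R} and $|B_{v,c}|,|C_{v,c}|$ along the lines of Claim~\ref{cla:concentration-L}, which is precisely the indirect ``decompose into certifiable pieces'' strategy you describe.
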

\begin{proof}
	We prove only for $N^{*+}(v,c)$. The proof for $N^{*-}(v,c)$ is strictly symmetric and requires no additional arguments.
	We again proceed indirectly.
	Let $A_{v,c}$ be the set of arcs $e \in N^{+}(v,c)$ that are still uncolored after step~\ref{step:unassign-and-coinflip1}.
	Let $B_{v,c}$ be the set of arcs $e = vu$ in $A_{v,c}$ such that $c$ is retained on some arc of $N^-(u,c)$.
	Let $C_{v,c}$ be the set of arcs $e =vu$ in $A_{v,c} \sm B_{v,c}$ such that $c$ is removed from  the lists $L(f)$ of all arcs $f \in N^{-}(u,c)$ because of the coin flip in step~\ref{step:update-lists-and-coinflip1}.
	
	The proof that $|A_{v,c}|$ is highly concentrated is virtually identical to the proof Claim~\ref{cla:concentration-R}.
	The proof that $|B_{v,c}|$ and $|C_{v,c}|$ are highly concentrated follows along the lines of the proof of Claim~\ref{cla:concentration-L}.
	Since $N^{*+}(v,c) = A_{v,c} \sm (B_{v,c} \cup C_{v,c})$, the desired result follows.
\end{proof}

\section{Proofs of Claims~\ref{cla:cycle-tweak} and \ref{cla:simultaneously}}\label{sec:cycle}
We give here the proofs of Claims~\ref{cla:cycle-tweak} and \ref{cla:simultaneously}, which can also be found in Sections~$4$ and $5$ of Lang and Postle~\cite{LP23}. The main difference is that the suspicious paths we considered are directed. However, this lead to only a few changes in the proof.
The following arguments are therefore almost identical to the ones of Lang and Postle.

\subsection{Proof of Claim~\ref{cla:cycle-tweak}}\label{sec:cycle-tweak}
The proof of Claim~\ref{cla:cycle-tweak}, similar to that of Claim~\ref{cla:MR-edge-bounds}, is still a concentration analysis.
First we show that the expectation of $|X(e)|$, $|Y^{+}(v,c)|$ and $|Y^-(v,c)|$ are bounded by $4\p$ and the expectation of $|Z^{+}(v,c)|$ and $|Z^-(v,c)|$ are bounded by $4\p \log \D$.
Then we show that with high probability, neither of those random variables deviates by more than $\tfrac{1}{4}\sqrt{L_i} \log^2 \D$ ($\tfrac{1}{4}\sqrt{N_i}\log^2 \D$, $\tfrac{1}{4}\sqrt{R_i}\log^2 \D$ respectively) from their expectation.

The next observation bounds the probability of a suspicious directed path becoming monochromatic after step~\ref{step:assign1} of our random coloring procedure.
\begin{observation}\label{obs:probability-monochromatic}
	Let $\gamma$ be a partial $(1,1)$-edge-coloring of $D$ from some lists $L(e)\subset \cL(e)$.
    Let $L$ be an integer such that $|L(e) |= L$ for every arc $e$.
	Let  $\gamma'$ be an extension of $\gamma$ obtained as follows:
	\begin{enumerate}[\upshape(I)]
		\item Activate each uncolored arc with probability $\p$.
		\item Assign to each activated arc $e$ a color chosen uniformly at random from $L(e)$.
	\end{enumerate}
	Then the probability that a suspicious directed path $P \in P_{L,\gamma}(u,v,c;k)$ becomes monochromatic with $c$ under $\gamma'$ is $(\p/L)^{k}$.
\end{observation}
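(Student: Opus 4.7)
The plan is to unpack the definitions and multiply independent probabilities. By Definition~\ref{def:suspicious1} together with the definition of $P_{L,\gamma}(u,v,c;k)$, the path $P$ has exactly $k$ uncolored arcs under $\gamma$, while all of its remaining arcs are already colored with $c$ under $\gamma$. Hence, for $P$ to become monochromatic with $c$ under $\gamma'$, it is necessary and sufficient that each of the $k$ uncolored arcs be activated in step~(I) and then assigned color $c$ in step~(II). Moreover, the ``suspicious'' condition guarantees that $c \in L(e)$ for every uncolored arc $e$ of $P$, so $c$ is a legal outcome of step~(II) on $e$.

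First I would observe that for a single uncolored arc $e$ of $P$, the event ``$e$ is activated and receives $c$'' has probability $\p \cdot \tfrac{1}{L}$, since activation happens with probability $\p$ and, conditional on activation, the color $c$ is drawn uniformly from $L(e)$, which has size exactly $L$ by hypothesis. Next, since steps~(I) and~(II) make independent choices for distinct arcs, the events associated with the $k$ distinct uncolored arcs of $P$ are mutually independent. Multiplying these $k$ identical probabilities gives $(\p/L)^{k}$, which is the claimed bound.

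This is essentially a one-line calculation, so there is no serious obstacle; the only thing worth being careful about is to verify that the colored arcs of $P$ contribute nothing to the probability (they are already colored with $c$ under $\gamma$ and thus remain colored with $c$ under the extension $\gamma'$), and that the uncolored arcs of $P$ are genuinely distinct so the independence across arcs applies cleanly. Both are immediate from the fact that $P$ is a (simple) directed path and from the definitions.
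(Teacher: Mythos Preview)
Your proposal is correct and follows essentially the same approach as the paper: compute the probability $\p/L$ that a single uncolored arc is activated and assigned $c$, then use independence across the $k$ uncolored arcs of $P$ to obtain $(\p/L)^k$. The paper's proof is just a terser version of what you wrote.
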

\begin{proof} 
	The probability that an arc $e$ is activated and assigned some color $\alpha \in L(e)$ is $\p/L$.
	Since activations and color assignments are independent and $P$ has uncolored length $k$, the observation follows.
\end{proof}

We now prove Claim~\ref{cla:cycle-tweak}.
\subsubsection{Expectation}\label{sec:XYZExpectation}
Let us start with the expectation of $X(e)$ for an arc $e=uv$.
Recall that this is the set of colors $c\in L(uv)$ for which there is a directed path in $P(v,u,c)$ that becomes monochromatic with $c$ after step~\ref{step:assign1}.
By~\eqref{equ:monochromatic-paths}, such a directed path $P$ is in $P(u,c;\ell)$ or $P(v,u,c;k)$ for some $1 \leq k \leq \ell-1$.
Since $\gamma$ is a $(1,1)$-edge-coloring and by~\ref{itm:color-neighbors-size-before} of Claim~\ref{cla:nibbler}, we can use Observation~\ref{obs:number-suspicious} to  bound $|P(u,c;\ell)| \leq N_i^{\ell}$ and $|P(v,u,c;k)| \leq N_i^{k-1}$.
By Observation~\ref{obs:probability-monochromatic}, the probability that $P \in P(v,u,c;k)$ becomes monochromatic with $c$ after step~\ref{step:assign1} is $(\p/ L_i)^{k}$.
Similarly, the probability that $P \in P(v,c;\ell)$ becomes monochromatic with $c$ after step~\ref{step:assign1} is $(\p/ L_i)^{\ell}$ .
Also note that by the choice of $p=1/4$ and $\ell=2\log \D$ in Setup~\ref{Setup}, we have
\begin{align*}
	\p^{\l} = \p\cdot \frac{1}{4^{2 \log \D -1}}  \leq \frac{\p}{2\D} \leq  \frac{\p}{L_i}.
\end{align*}
Therefore, we can bound the probability that a directed path in $P(v,u,c)$ becomes monochromatic with $c$ after step~\ref{step:assign} by
\begin{align*}
	\left(\frac{\p N_i}{L_i}  \right)^{\l} +    \frac{\p}{L_i} \sum_{k=0}^{{\l}-2} \left(\frac{\p N_i}{L_i}\right)^k 
	&\leq \p^{{\l}} +  \frac{\p}{L_i} \cdot \frac{1}{1-p}  
	\leq \frac{4\p}{L_i},
\end{align*}
where we use $N_i\le L_i$, which holds by Lemma~\ref{lem:sizes}.
Linearity of expectation then gives  $\Exp(|X(e)|) \leq 4\p$.
The expectations of $|Y^{+}(v,c)|$, $|Y^-(v,c)|$, $|Z^{+}(v,c)|$ and $|Z^-(v,c)|$ follow analogously, since $|N^{+}(v,c)|$, $|N^-(v,c)| \leq N_i$, $|R^{+}(v,c)|$, $|R^-(v,c)| \leq R_i$ by assumption and $N_i \leq L_i$ and $R_i / L_i \leq \log \D$ by Lemma~\ref{lem:sizes}.

\subsubsection{Concentration}
We use Talagrand's inequality to show that the random variables $|X(e)|$, $|Y^{+}(v,c)|$, $|Y^-(v,c)|$, $|Z^{+}(v,c)|$, $|Z^-(v,c)|$ are highly concentrated around their expectation. 

Fix an arc $e=uv$ and let us abbreviate $X = |X(e)|$.
Our intention is to apply Talagrand's inequality to $X$ with $k=2$, $r=\l$ and $t=\tfrac{1}{4}\sqrt{L_i} \log^2\D$.
Note that changing any of the outcome of the arc activation in step~\ref{step:activate1} or color assignment in step~\ref{step:assign1} can affect the size of $X$ by at most $2$.
This is because at most one color might be added to or removed from $X(e)$ this way.
Moreover, for every color $c \in X(e)$, there must be a directed path in $P \in   P(v,u,c)$ together with at most $\l$ arcs of $P$ that have been activated and assigned $c$. So $r=\ell$ is enough.
By the assumption $i < i_0$, we have that $L_i \geq \log^7 \D$.
Moreover, we have shown $\Exp(X) \leq 4\p$ in Appendix~\ref{sec:XYZExpectation}.
Now since $\D$ is large enough, we have
$$t=\tfrac{1}{4}\sqrt{L_i} \log^2\D \geq \log^5 \D>192 \sqrt{4\p\l} + 512\l \geq 96k \sqrt{r \Exp(X)} + 128rk^2.$$
Hence we obtain from Talagrand's inequality (Theorem~\ref{thm:talagrand}) that 
\begin{align*}
	\Pr(|X-\Exp(X)| > t) \leq 4 \exp\left({-\frac{t^2}{8k^2r(4\Exp(X)+t)}}\right) \leq   \exp\left(-\frac{\sqrt{L_i}\log \D}{2^ {10}}\right) \leq \frac{\D^{-10\log \D}}{5},
\end{align*}
where we used $4\Exp(X)\leq t$ and $r=\ell = 2 \log \D$ in the second inequality.
The concentration of $Y^{+}(v,c)$, $Y^-(v,c)$, $Z^{+}(v,c)$ and $Z^-(v,c)$ follows along the same lines.

\subsection{Proof of Claim~\ref{cla:simultaneously}}\label{sec:uniform-bounds}
We use the Lov\'asz Local Lemma (Lemma~\ref{lem:LLL}) to prove Claim~\ref{cla:simultaneously}.
The following observation follows in the same fashion as Observation~\ref{obs:number-suspicious} (using that $\gamma$ is a $(1,1)$-edge-coloring and conditions~\ref{itm:list-size-before},~\ref{itm:color-neighbors-size-before} of Claim~\ref{cla:nibbler}).
We omit the proof.
\begin{observation}\label{obs:suspicous-path-for-c}
	For every uncolored arc $e$, there are at most $L_iN_i^k$ directed paths of uncolored length at most $k$ which start with an uncolored arc, end with $e$ (start with $e$, end with an uncolored arc) and are suspicious for some color $c \in L(e)$.
\end{observation}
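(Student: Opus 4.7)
The plan is to slice each path into a terminal (or initial) arc plus a shorter suspicious subpath, and then invoke Observation~\ref{obs:number-suspicious}. Fix an uncolored arc $e = xy$ and consider the first version (paths that end with $e$). Any path $P$ of uncolored length $j' \le k$ that ends with $e$, starts with an uncolored arc, and is suspicious for some $c \in L(e)$ decomposes uniquely as $P'$ followed by $e$, where $P'$ is the initial subpath of $P$ ending at $x$. Since $e$ itself is uncolored, $P'$ has uncolored length $j := j' - 1 \in \{0, 1, \ldots, k-1\}$. The subpath $P'$ is still suspicious for $c$, and whenever $P'$ is nonempty its first arc equals the first arc of $P$ and is therefore uncolored. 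Hence $P'$ is either empty (forcing $P = e$) or lies in $P_{L,\gamma}(x, c; j)$ for some $1 \le j \le k-1$.

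Now count color by color. Fix $c \in L(e)$. The empty case contributes exactly one path. Condition~\ref{itm:color-neighbors-size-before} of Claim~\ref{cla:nibbler} gives $|N^-_{D,L,\gamma}(w,c)| \le N_i$ for every vertex $w$, so Observation~\ref{obs:number-suspicious} applied with $N = N_i$ yields $|P_{L,\gamma}(x, c; j)| \le N_i^j$ for every $j \ge 1$. Summing,
\begin{equation*}
1 + \sum_{j=1}^{k-1} N_i^j \;=\; \sum_{j=0}^{k-1} N_i^j \;\le\; N_i^k,
\end{equation*}
where the last inequality uses $N_i \ge 2$, which holds by Lemma~\ref{lem:sizes} for every $i \le i_0$. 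Multiplying by $|L(e)| = L_i$ bounds the number of paths of the first type by $L_i N_i^k$, as required.

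The symmetric version, in which paths start with $e$ and end with an uncolored arc, is handled by the same argument applied to the reverse orientation of $D$: slice off the initial arc $e$ to obtain a suffix $P''$ that is either empty or a suspicious path starting at $y$ with last arc uncolored of uncolored length $j \in \{1,\ldots,k-1\}$, and use the bound $|N^+_{D,L,\gamma}(w,c)| \le N_i$ (also from condition~\ref{itm:color-neighbors-size-before}) in the out-neighbor dual of Observation~\ref{obs:number-suspicious}. Because the whole argument is a direct compression of Observation~\ref{obs:number-suspicious}, I do not anticipate any real obstacle; the only sanity-check is that the "first arc uncolored" clause built into the definition of $P_{L,\gamma}(x, c; j)$ lines up with the "start with an uncolored arc" hypothesis on $P$, which it does because removing the terminal arc $e$ leaves the first arc of $P$ unchanged.
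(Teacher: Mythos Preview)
Your proof is correct and matches the paper's intent: the paper omits the proof entirely, noting only that it ``follows in the same fashion as Observation~\ref{obs:number-suspicious} (using that $\gamma$ is a $(1,1)$-edge-coloring and conditions~\ref{itm:list-size-before},~\ref{itm:color-neighbors-size-before} of Claim~\ref{cla:nibbler}).'' Your reduction---slicing off the terminal arc $e$, invoking Observation~\ref{obs:number-suspicious} with $N=N_i$ for each of the $L_i$ colors, and summing a geometric series---is exactly the intended argument, and your check that removing $e$ preserves the ``first arc uncolored'' condition (so that the remaining subpath genuinely lies in $P_{L,\gamma}(x,c;j)$) is the one point that needs care.
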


We now prove Claim~\ref{cla:simultaneously}.
\begin{proof}[Proof of Claim~\ref{cla:simultaneously}]
	We wish to apply the  Lov\'asz Local Lemma,
	so we need to analyze the dependencies between the random variables  $L'(e)$ , $N'^{+}(v,c)$, $N'^-(v,c)$, $R'^{+}(v,c)$,  $R'^-(v,c)$,  $X(e)$,  $Y^{+}(v,c)$,  $Y^-(v,c)$, $Z^{+}(v,c)$ and $Z^-(v,c)$.
	
	The variables $L'(e)$, $N'^{+}(v,c)$, $N'^-(v,c)$, $R'^{+}(v,c)$ and $R'^-(v,c)$ are determined by the activations and color assignments to arcs of distance at most $2$ to $v$ or $e$ in the digraph $D$.
	Hence random variables of type $L'(e)$, $N'^{+}(v,c)$, $N'^-(v,c)$, $R'^{+}(v,c)$ and  $R'^-(v,c)$ are each independent of all but at most $(8 N_iL_iR_i)^2$ random variables of type $L'(\cdot)$ , $N'^{+}(\cdot,\cdot)$, $N'^-(\cdot,\cdot)$, $R'^{+}(\cdot,\cdot)$ and $R'^-(\cdot,\cdot)$.

	The dependency relations associated with random variables of type $X(e)$, $Y^{+}(v,c)$, $Y^-(v,c)$, $Z^{+}(v,c)$ and $Z^-(v,c)$ are a bit more delicate.
	This is because the events determining these variables are not constrained by graph distance.
	
	Consider an uncolored arc $e=uv$.
	In the following, we bound the number of random variables of type $X(\cdot)$ that $X(e)$ is not independent of.
    If there is a directed path in $P(v,u,c)$ that becomes monochromatic with $c$, then this path together with $uv$ can be seen as a suspicious path ending with $uv$, and of uncolored length at most $\ell + 1$.
	Let $f$ be another uncolored arc.
	Then the random variables $X(e)$ and $X(f)$ are independent unless the following holds.
	There are suspicious paths $P,P'$ for colors $c \in L(e)$, $c'\in L(f)$ that end with $e$, $f$, respectively.
	Moreover, $P$ and $P'$ have each at most $\ell+1$ uncolored arcs and cross in an uncolored arc $g$. By considering subpaths of $P$ and $P'$ starting with $g$, we know there are suspicious paths from $g$ to $e$ and from $g$ to $f$.
	In light of Observation~\ref{obs:suspicous-path-for-c}, it follows that $X(e)$ is independent of all but at most $(L_iN_i^{\ell+1})^2$ random variables of type  $X(\cdot)$.
	Note that the first factor $L_iN_i^{\ell+1}$ counts the suspicious directed paths ending with $e$ and starting with an uncolored arc. Let $g$ denote this uncolored arc. Then the second factor counts the suspicious directed paths starting with $g$ and ending with an uncolored arc.
	
	In the same way, we can argue that $X(e)$ is independent of all but at most 
    $(L_iN_i^{\ell+1})^2$
    random variables of type $Y^-(v,c)$ ($Y^{+}(v,c)$ respectively).
	We also obtain that $X(e)$ is independent of all but at most $(L_iN_i^{\ell+1})(L_iN_i^{\ell}R_i)$ random variables of type  $Z^{+}(v,c)$ ($Z^-(v,c)$ respectively).
    Here, the second factor counts the at most $L_iN^{\ell}$ suspicious directed paths ending in one of the at most $R_i$ arcs of $R^-(v,c)$ ($R^{+}(v,c)$ respectively).
	Finally, we see that $X(e)$ is independent of all but at most $(L_iN_i^{\ell+1})(8N_iL_iR_i)^2$ 
    random variables of type $L'(f)$, $N'^{+}(v,c)$, $N'^-(v,c)$, $R'^{+}(v,c)$ and $R'^-(v,c)$.
	This concludes the analysis of the dependency relations for the random variable $X(e)$.
	
	Analogous statements can be made from the perspective of random variables $L'(e)$ , $N'^{+}(v,c)$, $N'^-(v,c)$, $R'^{+}(v,c)$,  $R'^-(v,c)$,  $Y^{+}(v,c)$, $Y^-(v,c)$, $Z^{+}(v,c)$ and $Z^-(v,c)$.
	Since the arguments are identical, we omit the details.
	In summary, each of the discussed random variables is independent of all but at most 
	$$d=10(8 L_i^2 N_i^{\ell+1} R_i^2)^2 \leq \D^{4\ell} = \D^{8\log \D} \ll \D^{10\log \D}$$ 
	of the other random variables.
	Hence we can apply the Lov\'asz Local Lemma (Lemma~\ref{lem:LLL}) with $d$ and $p=\D^{-10\log \D}$. 
	It follows that with positive probability the conclusions of Claims~\ref{cla:MR-edge-bounds}  and~\ref{cla:cycle-tweak} hold simultaneously for all arcs $e$, vertices $v$ and colors $c$.
\end{proof}

\end{document}